\newtheorem{theorem}{Theorem}
\newtheorem{corollary}{Corollary}
\newtheorem{lemma}{Lemma}
\newtheorem{proposition}{Proposition}
\newtheorem{definition}{Definition}
\DeclareMathOperator{\Li}{Li}
\DeclareMathOperator{\LOG}{L}
\newcommand{\bigO}{\mathcal{O}}
\newcommand{\be}{\begin{equation}}
\newcommand{\ee}{\end{equation}}
\newcommand{\ef}[1]{\, #1}
\newcommand{\dx}[1] {\!\mathrm{d}{#1}\,}
\newcommand{\smfrac}[2]{\genfrac{}{}{0.25pt}{1}{#1}{#2}}
\newcommand{\cZ}{\mathcal{Z}}
\newcommand{\cM}{\mathcal{M}}
\newcommand{\cN}{\mathcal{N}}
\newcommand{\vsig}{{\bm \sigma}}
\newcommand{\barh}{\bar{h}}
\title[The number of optimal matchings for Euclidean Assignment on the line]{The number of optimal matchings\\in the Euclidean Assignment Problem on the line}
\author[S. Caracciolo]{Sergio Caracciolo}
\author[V. Erba]{Vittorio Erba}  
\address[S. Caracciolo and V. Erba]{Dipartimento di Fisica, University of Milan and INFN, via Celoria 16, 20133 Milan, Italy}
\author[A. Sportiello]{Andrea Sportiello}
\address[A. Sportiello]{ LIPN, and CNRS, Universit\'e Paris 13, Sorbonne Paris Cit\'e, 99 Av.~J.-B.~Cl\'ement, 93430 Villetaneuse, France}
\begin{document}
\date{\today}

\begin{abstract} 
    We consider the Random Euclidean Assignment Problem in dimension
    $d=1$, with linear cost function.  In this version of the problem,
    in general, there is a large degeneracy of the ground state,
    i.e.\ there are many different optimal matchings 
    (say, $\sim \exp(S_N)$ at size $N$).  We characterize all possible
    optimal matchings of a given instance of the problem, and we give
    a simple product formula for their number.  

    Then, we study the probability distribution of $S_N$ (the
    zero-temperature entropy of the model), in the uniform random
    ensemble.  We find that, for large $N$, 
    $S_N \sim \smfrac{1}{2} N \log N + N s + \bigO\left( \log N
    \right)$, where $s$ is a random variable whose distribution $p(s)$
    does not depend on $N$.  We give expressions for the asymptotics
    of the moments of $p(s)$, both from a formulation as a Brownian process, and via singularity analysis of the generating
    functions associated to $S_N$.  The latter approach provides a
    combinatorial framework that allows to compute an asymptotic
    expansion to arbitrary order in $1/N$ for the mean and the variance of
    $S_N$.
\end{abstract}
\maketitle

\section{Introduction}

\noindent
The Euclidean Assignment Problem (EAP) is a combinatorial optimization
problem in which one has to pair $N$ white points to $N$ black points
minimizing a total cost function that depends on the Euclidean distances
among the points. The Euclidean Random Assignment Problem (ERAP)
is the statistical system in which these $2N$ points are
drawn from a given probability measure.
In the latter version of the problem, one is interested in characterizing
the statistical properties of the optimal solution, such as its average cost,
average structural properties, etc\dots

The EAP problem has applications in Computer Science, where it has been
used in computer graphics, image processing and machine
learning~\cite{peyre2020ComputationalOptimalTransport}, and it is also a
discretised version of a problem in functional analysis, called
\emph{optimal transport problem}, where the $N$-tuples of points are
replaced by probability measures.  The optimal transport problem has
recently seen a growing interest in pure mathematics, where it has
found applications in measure theory and gradient
flows~\cite{ambrosio2013UserGuideOptimala}.

To be more definite, the EAP problem is the optimization problem defined by
\begin{equation}
    \begin{split}
        \min_{\pi \in \mathfrak{S}_N} H_J(\pi)\, 
    \end{split}
\end{equation}
where
\begin{itemize}
    \item $J=(\{w_i\},\{b_j\})$ is the \emph{instance} of the
      problem, i.e., in the Euclidean version in dimension $d$, the
      datum of the positions of $N$ white points $\{w_i\}$ and $N$
      black points $\{b_j\}$ in $\mathbb{R}^d$;
    \item $\pi$ is a bijection between the two sets of points, linking
      biunivocally each white point to a unique black point. In other
      words, is a perfect matching. We denote by $\mathfrak{S}_N$ the
      set of all possible bijections between sets of size $N$;
    \item $H_J(\pi)$ is the cost function
        \begin{equation}
            \begin{split}
                H_J(\pi) = \sum_{i=1}^N c\left(\mathrm{dist}\left(w_i, b_{\pi(i)}\right)\right) \, 
            \end{split}
        \end{equation}
        i.e.\ the sum of the costs of the links of $\pi$, where a link
        is weighted using a \emph{link cost function}
        $c(x):\mathbb{R}^+\rightarrow\mathbb{R}$, depending only on
        the Euclidean distance among the two points.
\end{itemize}
In the EAP, $J$ is considered as fixed, while in the ERAP, $J$ is a
random variable with a fixed probability distribution.

It is a longstanding project to understand the phase diagram of the
model, in the plane $(p,d)$, when (in the simplest version of this
problem) one suppose that $J$ is given by $2N$ i.i.d.\ points from the
$d$-dimensional hypercube $[0,1]^d$, and that the link cost function
is given by $c(x)=x^p$.
This phase diagram is, to a certain extent, still mysterious, although
some progress has been made recently (see for example \cite{matteophdthesis}).  In particular, when $d=1$, the analytical
characterization of optimal solutions is quite
tractable~\cite{mccann1999ExactSolutionsTransportation,
  boniolo2014CorrelationFunctionGridPoisson}:
\begin{itemize}
    \item for $p>1$, the optimal matching is \emph{ordered}\footnote{
      The ordered matching is the one in which the first white point
      from the left is matched with the first black point and so on,
      and is represented by the identity permutation $\pi_{\rm ord}(i)
      = i$ if the points are sorted by increasing coordinate.  }
      independently on $J$, and this is due to the fact that the link
      cost function is increasing and convex.  In this case, plenty of
      results have been obtained on the statistics of the average
      optimal cost~\cite{Caracciolo:160, Caracciolo:169,
        Caracciolo:172, Caracciolo:177};
    \item for $0<p<1$, the optimal matching is \emph{non-crossing},
      meaning that pairs of matched points are either nested one
      inside the other, or disjoint (that is, matched pairs do not
      interlace) \cite{mccann1999ExactSolutionsTransportation}.  This
      property is imposed by the concavity of the link cost function.
      In this case, the optimal matching is not uniquely determined by
      the ordering of the points (although the viable candidates are
      typically reduced, roughly, from $n!$ to $\sqrt{n!}$), and a
      comparatively smaller number of results has been found so far
      for the random version of the
      problem~\cite{caracciolo2020DyckBoundConcave,
        bobkovTransportInequalitiesEuclidean, juillet2020solution,
        pAiry};
    \item for $p<0$, due to the fact that an overall positive factor
      in $c(x)$ is irrelevant in the determination of the optimal
      matching, it is questionable if one should consider the analytic
      continuation of the cost function $c(x) = x^p$ (which has the
      counter-intuitive property that the preferred links are the
      longest ones), or of the cost function $c(x) = p \, x^p$ (which
      has the property that the preferred links are the shortest ones,
      and that the limit $p \to 0$ is well-defined, as it corresponds
      to $c(x)=\log x$, but has the disadvantage of having average
      cost $-\infty$ when
        $p \leq -d$). In the first case, the optimal matching is
        \emph{cyclical}, meaning that the permutation that describes the
        optimal matching has a single cycle, and some result on the
        average optimal cost where obtained in \cite{Caracciolo:169}. In
        the second case, in the pertinent range $-d<p\leq 0$, it seems
        that the qualitative features of the $p \in \,(0,1)\,$ regime
        are preserved.
\end{itemize}
We notice that in all the cases above, as well as in all the cases
with $d \neq 1$ and any $p$, the optimal matching is `effectively
unique', meaning that it is almost surely unique, and, even in
presence of an instance showing degeneracy of ground states, almost
surely an infinitesimal perturbation immediately lifts the degeneracy.

This generic non-degeneracy property does not hold only for the
$d=p=1$ case (note that $p=1$ is the exponent at which the cost
function changes concavity). It is known
\cite{boniolo2014CorrelationFunctionGridPoisson} that in this case
there are (almost surely) at least two distinct optimal matchings for
each instance of the problem, the \emph{ordered matching} and the
\emph{Dyck matching} \cite{caracciolo2020DyckBoundConcave} 
which coincide only in the rather atypical case (with probability
$N!/(2N-1)!!$) in which, for all $i$, the $i$-th white and black
points are consecutive along the segment.


These observations suggest some fundamental questions for the $(d,p)=(1,1)$ problem:
\begin{enumerate}
    \item is it possible to characterize all the optimal matchings of a fixed instance $J$ of the EAP?
    \item how many optimal configurations are there for a fixed instance $J$ of the EAP?
    \item for random $J$'s, what are the statistical properties of the number of optimal configurations?
\end{enumerate}
The aim of this paper is to answer the three questions above.

Our interest is not purely combinatorial.  In fact, the ERAP is a
well-known and well-studied toy model of Euclidean spin glass
\cite{Orland1985,Mezard1985,Mezard1986a,Caracciolo:168}.  The
characterization of the set $\mathcal{Z}_J \subseteq \mathfrak{S}_N$
of the optimal matchings of $J$ is thus related to the computation of
the zero-temperature partition function $Z_J = |\mathcal{Z}_J|$ of the
disordered model, and of its zero-temperature entropy $S_J=\log(Z_J)$.

In this manuscript we will focus on the statistical properties of the
zero-temperature entropy, the thermodynamic potential that rules the
physics of the model when the disorder is \emph{quenched}, i.e.\ when
the timescale of the dynamics of the disorder degrees of freedom of
the model is much larger than that of the microscopic degrees of
freedom.  We leave to future investigations the study of the
\emph{annhealed} and \emph{replicated} partition functions $Z_J$ and
$Z_J^k$, which are less fundamental from the point of view of
statistical physics, but, as we will show elsewhere, have remarkable
combinatorial and number-theoretical properties.

\subsection{Summary of results}

In Section~\ref{sec:optimalMatch} we prove that, given a fixed
instance $J$ of the $(d,p) = (1,1)$ EAP problem, a matching $\pi$ is
optimal if and only if
\begin{equation}
    \begin{split}
        k_{\rm LB}(z) = k_\pi(z) \, 
    \end{split}
\end{equation}
where $k_{\rm LB}(z)$ is a function of $J$ that counts the difference
of the numbers of white and black points on the left of $z$, and
$k_\pi(z)$ counts the number of links of $\pi$ whose endpoints
lie on opposite sides of $z$.

In Section~\ref{sec:enum}, we show that $\cZ_J$, the set of optimal
matchings, depends on $J$ only through the ordering of the points,
while it is independent on their positions (provided that the ordering
is not changed).  Using a straightforward bijection between the
ordering of bi-colored point configurations on the line and a class of
lattice paths (the \emph{Dyck bridges}), we provide a combinatorial
recipe to construct the set $\cZ_J$.  As a corollary, we obtain a
rather simple product formula for the cardinality $Z_J := |\cZ_J|$,
that is, roughly speaking,
\begin{equation}
    \begin{split}
        Z_J = \prod_{\substack{ \rm descending \\ \rm steps}} \text{height of the step} \,
    \end{split}
\end{equation}
where the product runs over the descending steps of the Dyck bridge
associated with the ordering of the points of configuration $J$, and
the height of a step is, roughly speaking, the absolute value of its
vertical position.  This implies an analogous sum formula for the
entropy $S_J = \log(Z_J)$.

Then, we study the statistics of $S_J$ when $J$ is a random instance
of the problem.  Our techniques apply equally well, and with
calculations that can be performed in parallel, to two interesting
statistical ensembles:
\begin{description}
    \item[Dyck bridges] the case in which white and black points are just
        i.i.d.\ on the unit interval. 
    \item[Dyck excursions] the restriction of the previous ensemble to the
        case in which, for all $z \in [0,1]$, there are at least as many
        white points on the left of $z$ than black points.
\end{description}
The fact that we can study these two ensembles in parallel is present
also in our study for the distribution of the energy distribution of
the ``Dyck matching'', that we perform
elsewhere~\cite{caracciolo2020DyckBoundConcave,pAiry}.

In Section~\ref{sec:integral}, we highlight a connection between $S_J$
(in the two ensembles) and the observable
\begin{equation}
    \begin{split}
        s[\vsig] = \int_0^1 dt \log\left( |\vsig(t)| \right)  \, 
    \end{split}
\end{equation}
over Brownian bridges (or excursions) $\vsig$.
Simple scaling arguments imply that
\begin{equation}
    \begin{split}
        s = \lim_{N \rightarrow \infty} \frac{S_N- \frac{1}{2} N\log N}{N}  \, 
    \end{split}
\end{equation}
is a random variable with a non-trivial limit distribution for large
$N$.
We provide integral formulas for the integer moments of $s$, and we
use these formulas to compute analytically its first two moments in the
two ensembles.

In Section~\ref{sec:combinatorial}, we complement this analysis with a
combinatorial framework at finite size $N$.  We use this second
approach to  provide an effective strategy for the
  computation of finite-size corrections, that we illustrate by
  calculating the first and second moment, for both bridges and
  excursions.  

In particular, we can establish that
\begin{equation}
    \begin{split}
        S_N 
\stackrel{d}{=} \smfrac{1}{2} N \log N + N s + \bigO(\log(N))  \, 
    \end{split}
\end{equation}
where $s$ is a random variable whose distribution depends on the
ensemble (among bridges and excursions).
For Dyck bridges we have
\begin{equation}
    \begin{split}
        \langle s \rangle_{\rm B}&=- \frac{\gamma_E+2}{2} + \bigO\left(\frac{\log N}{\sqrt{N}} \right) \\ 
        \langle s^2 \rangle_{\rm B}&=\frac{4}{3} + \frac{\gamma_E^2}{4} + \gamma_E - \frac{\pi^2}{72} + \bigO\left( \frac{(\log N)^2}{\sqrt{N}} \right)
    \end{split}
\end{equation}
and for Dyck excursions we have
\begin{equation}
    \begin{split}
        \langle s \rangle_{\rm E}&=- \frac{\gamma_E}{2} + \bigO\left(\frac{\log N}{\sqrt{N}} \right) \\ 
        \langle s^2 \rangle_{\rm E}&=\frac{\gamma_E ^2}{4}+\frac{5 \pi ^2}{24} -2 + \bigO\left( \frac{(\log N)^2}{\sqrt{N}} \right)
    \end{split}
\end{equation}
In Section~\ref{sec:numerics} we provide numerical evidence that the
distribution of the rescaled entropy $s$ is non-Gaussian (but we leave
unsolved the question whether the centered distribution for the
excursions is an even function), and we confirm that the predicted values for the first two moments match with the simulated data.
  
With both our approaches it seems possible to access also
higher-order moments, however we cannot prove at present that, for
any finite moment, the evaluation can be performed in closed form, and
that (as we conjecture)
the result is in the form of a rational polynomial that
only involves $\gamma_E$ and (multiple) zeta functions.  
We will investigate these aspects in
future works.

\section{Optimal matchings at $p=1$}

\noindent
In the following, we focus on the EAP with $p=d=1$. We have $N$ white
points $W=\{w_{i}\}_{i=1}^{N}$ and $N$ black points
$B=\{b_{i}\}_{i=1}^{N}$ on a segment $[0,L]$ (i.e., we have `closed'
boundary conditions, instead of `periodic', as we would have if the
points were located on a circle).  We assume that the instance is
generic (i.e., no two coordinates are the same), and we label the
points so that the lists above are ordered ($w_i<w_{i+1}$ and
$b_i<b_{i+1}$).

\subsection{Characterization of optimal matchings} \label{sec:optimalMatch}

We start by giving an alternate, integral representation of the cost
function $H_J(\pi)$.  For a given matching $\pi$, and every
$z \in \mathbb{R}$ which is not a point of $W$ or $B$, define the function
\begin{align}
    k_{\pi}(z) 
&= \sum_{i=1}^N \kappa(z,w_i,b_{\pi(i)})
&
\kappa(z,x,y)=\left\{
    \begin{array}{ll}
        1 & x<z<y \textrm{~or~} y<z<x \\
        0 & \textrm{otherwise}
    \end{array}
\right.
\end{align}
In words, $\kappa(z,x,y)$ is just the indicator function over the
segment with extreme points $x$ and $y$, and $k_\pi(z)$ counts the
number of links of $\pi$ that have endpoints on opposite sides of $z$.

Furthermore, define the function
\begin{align}
    k_{\rm LB}(z) 
&= \big| \#( W \cap [0,z]) - \#( B \cap [0,z] ) \big|
\, ,
\end{align}
where $\#(I)$ denotes the cardinality of the set $I$. 
As well as $k_{\pi}(z)$, also $k_{\rm LB}$ is defined for all $z \in
\mathbb{R} \setminus (W \cup B)$, and counts the excess of black or
white points on the left of $z$.

Then, we have two simple observations
\begin{proposition}
    At $p=1$
    \begin{equation}
        H_J(\pi) = \int \dx{z} k_{\pi}(z) 
        \ef.
    \end{equation}
    Moreover, at $p=1$, for all $\pi$ and all $z$,
    \begin{equation}
k_{\pi}(z) \geq k_{\rm LB}(z) \ef.
    \end{equation}
\end{proposition}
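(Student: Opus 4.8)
The plan is to prove the two claims separately, starting with the integral identity. For the first claim, the key observation is that for a single link connecting a white point $w_i$ to a black point $b_{\pi(i)}$, the Euclidean distance is exactly $|w_i - b_{\pi(i)}| = \int \kappa(z, w_i, b_{\pi(i)}) \dx{z}$, since $\kappa(z, w_i, b_{\pi(i)})$ is the indicator function of the open interval with endpoints $w_i$ and $b_{\pi(i)}$, whose length is precisely that distance. Since at $p=1$ the link cost function is $c(x) = x$, we have $c(\mathrm{dist}(w_i, b_{\pi(i)})) = |w_i - b_{\pi(i)}|$, and summing over $i$ and exchanging the (finite) sum with the integral yields $H_J(\pi) = \sum_i \int \kappa(z, w_i, b_{\pi(i)}) \dx{z} = \int \sum_i \kappa(z, w_i, b_{\pi(i)}) \dx{z} = \int k_\pi(z) \dx{z}$. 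This step is essentially a one-line calculation once the indicator-function interpretation of $\kappa$ is made explicit.

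For the second claim, the plan is to fix a generic $z \in \mathbb{R} \setminus (W \cup B)$ and compare $k_\pi(z)$ with $k_{\rm LB}(z)$ pointwise. Let $a = \#(W \cap [0,z])$ and $b = \#(B \cap [0,z])$ be the numbers of white and black points to the left of $z$; then $k_{\rm LB}(z) = |a - b|$. Now classify the $N$ links of $\pi$ according to whether each of their two endpoints lies to the left or right of $z$: let $n_{\ell\ell}$ be the number of links with both endpoints left of $z$, $n_{rr}$ with both right, $n_{\ell r}$ with white endpoint left and black endpoint right, and $n_{r\ell}$ with white endpoint right and black endpoint left. By definition $k_\pi(z) = n_{\ell r} + n_{r\ell}$. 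Counting white points left of $z$ gives $a = n_{\ell\ell} + n_{\ell r}$, and counting black points left of $z$ gives $b = n_{\ell\ell} + n_{r\ell}$. Subtracting, $a - b = n_{\ell r} - n_{r\ell}$, so $|a-b| = |n_{\ell r} - n_{r\ell}| \leq n_{\ell r} + n_{r\ell} = k_\pi(z)$, which is exactly the desired inequality $k_\pi(z) \geq k_{\rm LB}(z)$.

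There is no serious obstacle here: both parts are elementary once the right bookkeeping is set up. The only mild subtlety is to make sure the genericity assumption and the restriction to $z \notin W \cup B$ are invoked so that all the counts are well-defined and the indicator intervals are handled without boundary ambiguities; and to note that the integrals are over a bounded region (effectively $[0,L]$, or a slightly larger interval containing all points) so that the exchange of sum and integral in the first part is trivially justified. I would present the link-by-link decomposition as the conceptual heart of the argument, since the same accounting — tracking links crossing a dividing point $z$ versus the left-excess of one color — is presumably what drives the characterization of optimal matchings in the next subsection, where the inequality becomes an equality precisely when no link is "wasted" by crossing $z$ in the wrong direction.
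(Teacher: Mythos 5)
Your proof is correct, and in fact the paper offers no proof at all for this proposition (it is introduced as ``two simple observations''); your link-by-link indicator decomposition for the integral identity and the four-way counting argument $a-b=n_{\ell r}-n_{r\ell}$, $|a-b|\leq n_{\ell r}+n_{r\ell}=k_\pi(z)$ for the inequality are exactly the intended elementary arguments. Your closing remark is also on target: equality holds precisely when no two links cross $z$ in opposite directions, which is the content of Corollary~\ref{cor.kLb} and Proposition~\ref{prop:stack}.
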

\noindent
This has the immediate corollary that, for all $\pi$,
\begin{equation}
    H_J(\pi) \geq 
    H_J^{\rm LB} :=
    \int \dx{z} k_{\rm LB}(z) 
    \ef.
\end{equation}
Now, call $\pi_{\rm id}$ the identity permutations, that is the so-called \emph{ordered matching}. By simple
inspection, we have that $H_J(\pi_{\rm id}) = H_J^{\rm LB}$.  This
implies that $\pi_{\rm id}$ is optimal, and more generally
\begin{corollary}
    \label{cor.kLb}
    $\pi \in \cZ_J$ iff the functions $k_{\rm LB}$ and $k_{\pi}$ coincide.
\end{corollary}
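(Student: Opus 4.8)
The plan is to read off the claim from the two parts of the Proposition together with the already-noted identity $H_J(\pi_{\rm id}) = H_J^{\rm LB}$. First I would record the consequence that the minimum of the cost equals exactly $H_J^{\rm LB}$: the corollary to the Proposition gives $H_J(\pi) \geq H_J^{\rm LB}$ for every $\pi \in \mathfrak{S}_N$, while the ordered matching saturates this bound by inspection, so $\min_{\pi} H_J(\pi) = H_J^{\rm LB}$, $\cZ_J \neq \emptyset$, and $\pi \in \cZ_J$ if and only if $H_J(\pi) = H_J^{\rm LB}$.

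Next I would turn this equality of numbers into an equality of functions. Using the integral representation $H_J(\pi) = \int \dx{z}\, k_{\pi}(z)$ and the definition $H_J^{\rm LB} = \int \dx{z}\, k_{\rm LB}(z)$, the cost defect is
\[
  H_J(\pi) - H_J^{\rm LB} = \int \dx{z}\, \big( k_{\pi}(z) - k_{\rm LB}(z) \big) \, .
\]
By the second part of the Proposition, the integrand is nonnegative at every $z \in \mathbb{R}\setminus(W\cup B)$. Moreover $k_{\pi}$ is a finite sum of indicator functions of intervals, and $k_{\rm LB}$ is the absolute value of a difference of two such sums, so both are step functions: piecewise constant with finitely many jumps, all located at points of $W\cup B$, and both vanish outside the convex hull of the $2N$ points, hence are compactly supported. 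Therefore $k_{\pi} - k_{\rm LB}$ is a nonnegative step function with finitely many pieces, and a nonnegative constant on an interval of positive length contributes a strictly positive amount to the integral unless that constant is zero; so the integral above vanishes if and only if $k_{\pi}(z) = k_{\rm LB}(z)$ for every $z$ at which both are defined. Combining with the first paragraph gives exactly the statement: $\pi \in \cZ_J$ iff $k_{\rm LB}$ and $k_{\pi}$ coincide.

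There is essentially no serious obstacle in the Corollary itself, since all the analytic content sits in the Proposition (the integral formula for $H_J$ and the inequality $k_{\pi}\geq k_{\rm LB}$) and in the elementary inspection that $H_J(\pi_{\rm id}) = H_J^{\rm LB}$ — the latter amounting to the observation that, when the points are sorted and matched by the identity, a link $i$ straddles $z$ precisely according to the sign pattern that also governs $|\#(W\cap[0,z]) - \#(B\cap[0,z])|$. The only point that deserves a word of care is the passage from ``equal integrals'' to ``equal functions'': for a general nonnegative integrable integrand one would obtain equality only almost everywhere, but here the integrand is piecewise constant with jumps confined to the finite set $W\cup B$, so almost-everywhere equality upgrades to equality at every point of $\mathbb{R}\setminus(W\cup B)$, which is the natural domain on which $k_{\pi}$ and $k_{\rm LB}$ are defined.
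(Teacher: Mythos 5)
Your proof is correct and follows exactly the route the paper intends (the paper leaves the corollary as an immediate consequence of the Proposition plus the observation that $H_J(\pi_{\rm id}) = H_J^{\rm LB}$): optimality is equivalent to $H_J(\pi)=H_J^{\rm LB}$, and the vanishing of $\int \dx{z}\,(k_\pi - k_{\rm LB})$ for a nonnegative, compactly supported step function forces pointwise equality on $\mathbb{R}\setminus(W\cup B)$. The care you take in upgrading almost-everywhere equality to equality on the whole domain is exactly the right (and only) detail worth spelling out.
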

\noindent
See Figure~\ref{fig:simplematchings} for the description of all optimal matchings at $N=2$.

\begin{figure}
    \centering
    \begin{tikzpicture}
        \node at (-1,0) {$\pi_{1}$};
        \node at (-1,-1.5) {$\pi_{2}$};

        \draw[fill, opacity = 0.1] (-0.5,1.25) -- (8,1.25) -- (8,-2) -- (5.5,-2) -- (5.5,-0.5) -- (-0.5,-0.5) -- (-0.5,1.25);

        \node at (3.75,0.75) {Optimal configurations};

        \begin{scope}
            \draw (0.5,0) arc (0:180:0.25);
            \draw (1.5,0) arc (0:180:0.25);


            \draw[fill=white] (0,0) circle (0.1);
            \draw[fill=black] (0.5,0) circle (0.1);
            \draw[fill=black] (1,0) circle (0.1);
            \draw[fill=white] (1.5,0) circle (0.1);
        \end{scope}

        \begin{scope}[shift={(0,-1.5)}]
            \draw (1.0,0) arc (0:180:0.5);
            \draw (1.5,0) arc (0:180:0.5);

            \draw[fill=white] (0,0) circle (0.1);
            \draw[fill=black] (0.5,0) circle (0.1);
            \draw[fill=black] (1,0) circle (0.1);
            \draw[fill=white] (1.5,0) circle (0.1);
        \end{scope}

        \begin{scope}[shift={(3,0)}]
            \draw (0.5,0) arc (0:180:0.25);
            \draw (1.5,0) arc (0:180:0.25);

            \draw[fill=white] (0,0) circle (0.1);
            \draw[fill=black] (0.5,0) circle (0.1);
            \draw[fill=white] (1,0) circle (0.1);
            \draw[fill=black] (1.5,0) circle (0.1);
        \end{scope}

        \begin{scope}[shift={(3,-1.5)}]
            \draw (1.0,0) arc (0:180:0.25);
            \draw (1.5,0) arc (0:180:0.75);

            \draw[fill=white] (0,0) circle (0.1);
            \draw[fill=black] (0.5,0) circle (0.1);
            \draw[fill=white] (1,0) circle (0.1);
            \draw[fill=black] (1.5,0) circle (0.1);
        \end{scope}

        \begin{scope}[shift={(6,0)}]
            \draw (1.0,0) arc (0:180:0.5);
            \draw (1.5,0) arc (0:180:0.5);

            \draw[fill=white] (0,0) circle (0.1);
            \draw[fill=white] (0.5,0) circle (0.1);
            \draw[fill=black] (1,0) circle (0.1);
            \draw[fill=black] (1.5,0) circle (0.1);
        \end{scope}

        \begin{scope}[shift={(6,-1.5)}]
            \draw (1.0,0) arc (0:180:0.25);
            \draw (1.5,0) arc (0:180:0.75);

            \draw[fill=white] (0,0) circle (0.1);
            \draw[fill=white] (0.5,0) circle (0.1);
            \draw[fill=black] (1,0) circle (0.1);
            \draw[fill=black] (1.5,0) circle (0.1);
        \end{scope}

    \end{tikzpicture}
    \caption{\footnotesize 
        Optimal matchings at $p=1$ for $2N=4$ points.
        $\pi_{1}$ is the matching in which the first white point is matched with the first black point; $\pi_{2}$ is the only other possible matching.
        All optimal configurations satisfy $k_{\pi}(x) = k_{\rm LB}(x)$.}
    \label{fig:simplematchings}
\end{figure}
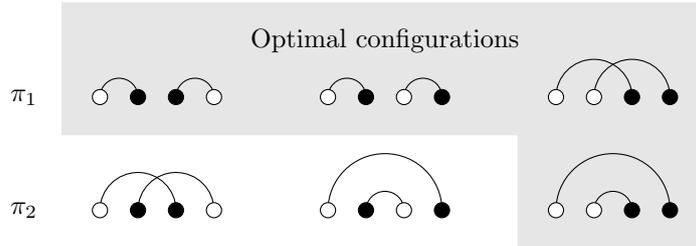

In the following we will provide a simple algorithm to construct the
optimal matchings of a given instance.
In order to do this, we shall now
give another characterization of optimal matchings:
\begin{definition}
\label{def.stack}
    Let $\pi$ be a matching. Let us call $P=(p_1,\ldots,p_{2N})$ the
    ordered list of the points in $W \cup B$.
    For $1\leq i \leq 2N$, we call $P_i(\pi)$ the 
    \emph{stack of $\pi$ at $i$,} that is, the set of points in
    $\{p_1,\ldots,p_i\}$ that are paired by $\pi$ to points in
    $\{p_{i+1},\ldots,p_{2N}\}$ (see Figure~\ref{fig:stackmatching}).
\end{definition}
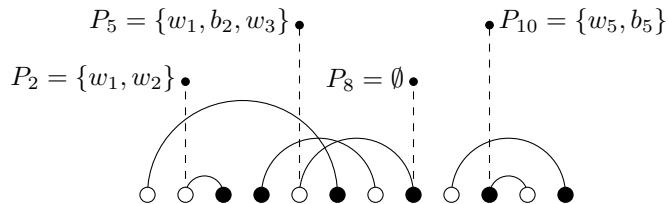
\begin{figure}
    \centering
    \begin{tikzpicture}
        \begin{scope}
            \draw[dashed] (2,0) -- (2,2.25);
            \draw[fill=black] (2,2.25) circle (0.05);
            \node[above left] at (2,2) {$P_5 = \{w_{1}, b_{2}, w_{3}\}$};

            \draw[dashed] (0.5,0) -- (0.5,1.5);
            \draw[fill=black] (0.5,1.5) circle (0.05);
            \node[above left] at (0.5,1.25) {$P_2 = \{w_{1}, w_{2}\}$};

            \draw[dashed] (3.5,0) -- (3.5,1.5);
            \draw[fill=black] (3.5,1.5) circle (0.05);
            \node[above left] at (3.5,1.25) {$P_8 = \emptyset$};

            \draw[dashed] (4.5,0) -- (4.5,2.25);
            \draw[fill=black] (4.5,2.25) circle (0.05);
            \node[above right] at (4.5,2) {$P_{10} = \{w_{5}, b_{5}\}$};

            \draw (1,0) arc (0:180:0.25);
            \draw (2.5,0) arc (0:180:1.25);
            \draw (3,0) arc (0:180:0.75);
            \draw (3.5,0) arc (0:180:0.75);
            \draw (5.5,0) arc (0:180:0.75);
            \draw (5,0) arc (0:180:0.25);

            \draw[fill=white] (0.0,0) circle (0.1);
            \draw[fill=white] (0.5,0) circle (0.1);
            \draw[fill=black] (1.0,0) circle (0.1);
            \draw[fill=black] (1.5,0) circle (0.1);
            \draw[fill=white] (2.0,0) circle (0.1);
            \draw[fill=black] (2.5,0) circle (0.1);
            \draw[fill=white] (3.0,0) circle (0.1);
            \draw[fill=black] (3.5,0) circle (0.1);
            \draw[fill=white] (4.0,0) circle (0.1);
            \draw[fill=black] (4.5,0) circle (0.1);
            \draw[fill=white] (5.0,0) circle (0.1);
            \draw[fill=black] (5.5,0) circle (0.1);
        \end{scope}
    \end{tikzpicture}
    \caption{\footnotesize The stack of a matching at position $i$ is
        given by all the points on the left side of point $i$ (including
        $i$ itself) that are matched to a point on the right side of
        point $i$.  In the picture, $w_{i}$ is the $i$-th white point
        from the right, and analogously $b_{i}$ is the $i$-th black
        point from the right.  At the locations specified by the dashed
    lines, we show the stack of the represented matching.  }
    \label{fig:stackmatching}
\end{figure}
\begin{proposition}
    \label{prop:stack}
    $\pi \in \cZ_J$
    iff, for all $1 \leq i \leq 2N$, the stack of $\pi$ at $i$ is either
    empty or monochromatic, i.e.\ if 
    $P_i(\pi) \cap W = \varnothing$ or $P_i(\pi) \cap B = \varnothing$.
\end{proposition}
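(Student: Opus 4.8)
The plan is to reduce the statement to Corollary~\ref{cor.kLb} by matching the functions $k_\pi$ and $k_{\rm LB}$, interval by interval, against the stack data. Since the instance is generic, $z \mapsto (k_\pi(z),k_{\rm LB}(z))$ is piecewise constant, with one piece on each open interval $(p_i,p_{i+1})$ for $1\le i\le 2N-1$, together with the two unbounded pieces $z<p_1$ and $z>p_{2N}$ on which both functions vanish (the latter matching $P_{2N}(\pi)=\varnothing$). So it is enough to analyse $k_\pi$ and $k_{\rm LB}$ on a fixed interval $(p_i,p_{i+1})$. There I would first record the identity
\begin{equation}
    k_\pi(z) = |P_i(\pi)| \qquad (z \in (p_i,p_{i+1})) \ef,
\end{equation}
which is just a restatement of the definitions: a link of $\pi$ has its two endpoints on opposite sides of $z$ exactly when one of them lies in $\{p_1,\dots,p_i\}$ and the other in $\{p_{i+1},\dots,p_{2N}\}$, i.e.\ exactly when its left endpoint belongs to the stack $P_i(\pi)$.

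The second step is to express $k_{\rm LB}(z)$ through the stack as well. Let $n_W$, $n_B$ be the number of white, resp.\ black, points among $p_1,\dots,p_i$, so $k_{\rm LB}(z)=|n_W-n_B|$ on $(p_i,p_{i+1})$. Each of the first $i$ points is matched by $\pi$ either to another of the first $i$ points — and such internal links, since $\pi$ pairs white with black, consume equally many white and black points — or to a point of $\{p_{i+1},\dots,p_{2N}\}$, in which case it belongs to $P_i(\pi)$. Writing $m$ for the number of links of $\pi$ with both endpoints among the first $i$ points, this bookkeeping yields $|P_i(\pi)\cap W|=n_W-m$ and $|P_i(\pi)\cap B|=n_B-m$, hence
\begin{equation}
\begin{split}
    k_{\rm LB}(z) &= |n_W-n_B| = \big|\,|P_i(\pi)\cap W|-|P_i(\pi)\cap B|\,\big| \ef, \\
    k_\pi(z) &= |P_i(\pi)| = |P_i(\pi)\cap W|+|P_i(\pi)\cap B| \ef.
\end{split}
\end{equation}

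To conclude: by Corollary~\ref{cor.kLb}, $\pi\in\cZ_J$ iff $k_\pi(z)=k_{\rm LB}(z)$ for all admissible $z$, i.e.\ iff the two quantities above agree for every $i$. Putting $a=|P_i(\pi)\cap W|$ and $b=|P_i(\pi)\cap B|$, this is the condition $a+b=|a-b|$, which for non-negative integers holds if and only if $\min(a,b)=0$. Hence $\pi\in\cZ_J$ iff, for every $i$, $P_i(\pi)\cap W=\varnothing$ or $P_i(\pi)\cap B=\varnothing$, i.e.\ the stack at $i$ is empty or monochromatic; the case $i=2N$ is automatic since $P_{2N}(\pi)=\varnothing$. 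This is exactly the assertion.

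I do not anticipate a real obstacle: the argument is essentially a dictionary between the ``transversal links'' description of Corollary~\ref{cor.kLb} and the stack description, closed off by the elementary arithmetic fact above. The only point that needs a little care is the second step — verifying that the signed colour imbalance $|P_i(\pi)\cap W|-|P_i(\pi)\cap B|$ equals $n_W-n_B$ — which rests solely on internal links being colour-balanced, a consequence of $\pi$ being a bijection $W\to B$; the handling of the indexing and of the unbounded pieces $z<p_1$, $z>p_{2N}$ is routine.
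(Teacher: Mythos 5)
Your proof is correct, and it takes a slightly different (and arguably tidier) route than the paper's for the ``only if'' direction. Both arguments reduce the statement to Corollary~\ref{cor.kLb} and both rest on the observation that $k_\pi$ equals the stack cardinality on $(p_i,p_{i+1})$; the ``if'' direction is then essentially identical in the two treatments. Where you diverge is in handling non-monochromatic stacks: the paper exhibits an explicit improving swap (exchanging the partners of a white and a black point in the stack, which lowers $k_\pi$ by $2$ on an interval), whereas you compute both sides exactly --- using that internal links are colour-balanced to get $k_{\rm LB}=\bigl||P_i\cap W|-|P_i\cap B|\bigr|$ and $k_\pi=|P_i\cap W|+|P_i\cap B|$ --- and reduce everything to the arithmetic identity $a+b=|a-b|\Leftrightarrow\min(a,b)=0$. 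Your version yields both directions from a single bookkeeping identity and quantifies the gap ($k_\pi-k_{\rm LB}=2\min(a,b)$ on each interval), while the paper's exchange argument has the minor virtue of exhibiting a concrete cheaper matching, which foreshadows the constructive enumeration of $\cZ_J$ in Lemma~\ref{lem.ZJ}. Your handling of the unbounded pieces and of $P_{2N}(\pi)=\varnothing$ is fine.
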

\begin{proof}
    Suppose that for, some $1 \leq i \leq 2N$, the stack of $\pi$ at
    $i$ is non-empty and non-monochromatic.  Then there are 
$p_w \in P_i(\pi) \cap W$ and $p_b \in P_i(\pi) \cap B$
    which are matched to 
$q_b \in P_i^c(\pi) \cap B$ and $q_w \in P_i^c \cap W$,
    respectively.  In the matching $\pi'$ in which we swap these two
    pairs, we have $k_{\pi'}(z) = k_{\pi}(z)-2$ for all
    $\max(p_w,p_b)<z<\min(q_w,q_b)$, and $k_{\pi'}(z) = k_{\pi}(z)$
    elsewhere, thus, by Corollary \ref{cor.kLb}, $\pi$ cannot be
    optimal.

    Viceversa, let $\pi$ have only empty or monochromatic stacks.
    This means that, in a right neighbourhood of $p_i$, the
    cardinality of the stack, which by definition coincides with
    $k_\pi(x)$, is exactly given by $k_{\rm LB}$. Furthermore, both these
    functions are constant on the intervals between the points (where
    they jump by $\pm 1$), so the two functions coincide everywhere on
    the domain. This means, by Corollary~\ref{cor.kLb}, that $\pi$ must be optimal.
\end{proof}

\subsection{Enumeration of optimal matchings} \label{sec:enum}

We are now interested in enumerating the optimal matchings at $p=1$
for a fixed configuration $J$ of size $N$.  First of all, we give an
alternative representation of $J$ (already adopted in
\cite{caracciolo2020DyckBoundConcave}) that will be useful in the
following.  A configuration $J$ can be encoded by sorting the $2N$
points in order of increasing coordinate (as in Definition
\ref{def.stack} above), and defining
\begin{itemize}
    \item a vector of spacings 
$\vec{s}(J) \in (\mathbb{R}^+)^{2N}$, given by
      $\vec{s}(J)=(p_1,p_2-p_1,p_3-p_2,\ldots,p_{2N}-p_{2N-1})$;
    \item a vector of signs, $\vsig(J) \in \{-1,+1\}^{2N}$ such
        that if $p_i$ is white (resp.\ black),
        $\sigma_{i}=+1$ (resp.\ $-1$). Note that $\sum_i \sigma_i = 0$.
\end{itemize}
It is easily seen that the criterium in Proposition \ref{prop:stack}
is stated only in terms of $\vsig(J)$. This makes clear that the set
$\cZ_J$ itself is fully determined by $\vsig(J)$.  Thus, from this
point onward, we understand that $\cZ(\vsig)$, $Z(\vsig)$ and
$S(\vsig)$ are synonims of the quantities $\cZ_J$, $Z_J$ and $S_J$,
for any $J$ with $\vsig(J)=\vsig$.

Binary vectors can be represented as lattice paths, i.e.\ paths in the
plane, starting at the origin and composed by up-steps (or rises)
$(+1,+1)$ and down-steps (or falls) $(+1,-1)$.  So we have a bijection
among zero-sum binary vectors $\vsig$ and lattice bridges, in which
the $i$-th step of the path is $(i,\sigma_i)$.  The bijection between
color orderings, binary vectors and lattice paths is so elementary
that in the following, with a slight abuse of notation, we will just
identify the three objects.

We are interested in two classes of binary vectors\;/\;lattice paths:
\begin{itemize}
    \item \textbf{Dyck bridges} $\mathcal{B}_N$ of semi-lenght (size)
      $N$. These are lattice paths with an equal number of up- and
      down-steps. They are precisely in bijection with the color
      orderings of $N$ white and $N$ black points, i.e.\ with the
      color ordering of all the possible configurations $J$.  There
      are $B_N = |\mathcal{B}_N| = \binom{2N}{N}$ Dyck bridges of size
      $N$.
    \item \textbf{Dyck paths} $\mathcal{C}_N$ of semi-length (size)
      $N$. These are lattice bridges that never reach negative
      ordinate (we will also call them \emph{excursions}, in analogy
      with their continuum counterparts).  They are in bijection with
      configurations $J$ in the forementioned ``Dyck excursions''
      ensemble.  There are $C_N = |\mathcal{C}_N| = \frac{1}{N+1} B_N$
      Dyck paths of size $N$.
\end{itemize}
In the following, the generating function of the series $B_N$ and
$C_N$ will turn useful. We have
\begin{equation}
\label{eq.BzCz}
    \begin{split}
        B(z) &= \sum_{N \geq 0} B_N z^N = (1-4z)^{-\frac{1}{2}}  \, ,\\
        C(z) &= \sum_{N \geq 0} C_N z^N = \frac{1-\sqrt{1-4z}}{2z}  \, .
    \end{split}
\end{equation}
Our notion of height will be associated to the steps of the path. We
call $h_{i}(\vsig)$ the \emph{height of the path at step $i$}, that
is, the height of the midpoint of the $i$-th step of the path, that in
terms of the binary vector reads
\begin{equation}
    \begin{split}
        h_{i}(\vsig) = 
        \sigma_1 + \sigma_2 + \cdots + \sigma_{i-1} + 
        \frac{\sigma_{i}}{2} \, .
    \end{split}
\end{equation}
The choice of the midpoint to compute the height is arbitrary, but has
the advantage of being a symmetric definition with respect to
reflections w.r.t.\ the $x$-axis, while taking values in an equispaced range of integers.
We can then define $\bar{h}_i$ as the positive integers 
\begin{equation}
    \label{def.hb}
    \bar{h}_i=|h_i|+\frac{1}{2}
    \ef.
\end{equation}
Then we have
\begin{lemma}
    \label{lem.ZJ}
    \begin{equation}
        Z(\vsig)
        = \prod_{\substack{
                i=1,\ldots,2N \\
                h_i \sigma_i < 0
        }} \bar{h}_i(\vsig)
        \ef.
        \label{eq:ZJfinal}
    \end{equation}
\end{lemma}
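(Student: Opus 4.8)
The plan is to construct every optimal matching by a single left-to-right sweep of the ordered points $p_1,\dots,p_{2N}$, keeping track of the stack $P_i(\pi)$ of Definition~\ref{def.stack}. At step $i$ the point $p_i$ is either \emph{pushed}, i.e.\ matched to some $p_j$ with $j>i$ (so $P_i=P_{i-1}\cup\{p_i\}$), or it \emph{pops} a stack element, i.e.\ is matched to some $p_j$ with $j<i$, necessarily $p_j\in P_{i-1}$ (so $P_i=P_{i-1}\setminus\{p_j\}$). A matching is thus encoded by the sequence of push/pop moves together with, at each pop, the choice of which stack element is removed.

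The first step is to observe that, for an \emph{optimal} matching, the push/pop pattern is forced by $\vsig$ alone. By Proposition~\ref{prop:stack}, every $P_i(\pi)$ must be empty or monochromatic. If $P_{i-1}$ is empty, or nonempty of the same colour as $p_i$, then $p_i$ cannot be matched to a stack element, since every edge of $\pi$ joins a white and a black point; hence $p_i$ must be pushed, and this preserves monochromaticity. If instead $P_{i-1}$ is nonempty and $p_i$ has the opposite colour, pushing $p_i$ would make $P_i$ non-monochromatic, which is forbidden; hence $p_i$ must pop one of the $|P_{i-1}|$ stack elements, and any such choice again leaves $P_i$ monochromatic. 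Since $\sum_i\sigma_i=0$, the stack empties by step $2N$, so any sequence of these moves yields a genuine perfect matching, optimal by Proposition~\ref{prop:stack}. Therefore the push/pop pattern depends on $\vsig$ only, and
\[
Z(\vsig)=\prod_{i\ :\ \text{step }i\text{ is a pop}} |P_{i-1}(\pi)| \, .
\]

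It remains to express this product through the heights $\bar h_i$. Because every point of $P_i(\pi)$ has the same colour, a direct count — equivalently Corollary~\ref{cor.kLb} applied just to the right of $p_i$, where $|P_i(\pi)|=k_\pi(z)=k_{\rm LB}(z)$ — gives $|P_i(\pi)|=|\sigma_1+\cdots+\sigma_i|$, a function of $\vsig$ only. Setting $m_{i-1}:=|P_{i-1}(\pi)|=|\sigma_1+\cdots+\sigma_{i-1}|=|h_i-\tfrac12\sigma_i|$, step $i$ is a push precisely when the partial sum $\sigma_1+\cdots+\sigma_{i-1}$ vanishes or shares the sign of $\sigma_i$, and a pop otherwise. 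In the push case $h_i=\sigma_i(m_{i-1}+\tfrac12)$, hence $h_i\sigma_i=m_{i-1}+\tfrac12>0$; in the pop case $m_{i-1}\ge1$ and $h_i=\sigma_i(\tfrac12-m_{i-1})$, hence $h_i\sigma_i=\tfrac12-m_{i-1}<0$ and $\bar h_i=|h_i|+\tfrac12=m_{i-1}$. Thus the pop steps are exactly the indices with $h_i\sigma_i<0$, and for them $|P_{i-1}(\pi)|=\bar h_i(\vsig)$, which is \eqref{eq:ZJfinal}.

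The only point that needs care is the claim that $Z(\vsig)$ is a product at all, i.e.\ that the choices made at the various pop steps are mutually independent. This is exactly what the identity $|P_{i-1}(\pi)|=|\sigma_1+\cdots+\sigma_{i-1}|$ secures: the number of options at a pop step is dictated by $\vsig$, unaffected by earlier choices, each of which keeps the stack monochromatic and hence extends to an optimal matching.
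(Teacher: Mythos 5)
Your proof is correct and takes essentially the same route as the paper's: it characterizes optimal matchings via the monochromatic-stack criterion of Proposition~\ref{prop:stack}, identifies the pop (closing) steps as exactly those with $h_i\sigma_i<0$, notes that each such step carries $|P_{i-1}(\pi)|=\bar h_i(\vsig)$ independent choices, and multiplies. You are in fact somewhat more explicit than the paper in verifying that the push/pop pattern is forced, that the sign condition matches the pops, and that $\bar h_i=|P_{i-1}(\pi)|$.
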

\begin{proof}
The proof goes through the characterisation of the stacks of optimal
configurations, given in Proposition \ref{prop:stack}. First of all,
notice that the list of positions given by the condition $h_i \sigma_i
< 0$ is the one at which the stack at $i-1$ decreases its size,
because one point in the stack is paired to the $i$-th point. We shall
call \emph{closing steps} the elements of this set (and \emph{closing
  points} the associated points in $P$), and \emph{opening steps}
those in the complementary set.  Indeed, the cardinality of the stack
at $i-1$ is exactly $\bar{h}_i$, while the sign of $h_i$ determines
the colour of the points in the stack at $i-1$ (which is also the
colour of the stack at $i$, unless the latter is empty). So, there are
exactly $\bar{h}_i$ choices for the pairing at $i$, while, if $i$ is
not in the list above, the choice is unique. As the cardinalities of
the stacks are the same for all optimal configurations, the choice at
$i$ does not affect the number of possible choices at $j>i$, and we
end up with Equation~\eqref{eq:ZJfinal}.
\end{proof}     

Notice that Equation~\eqref{eq:ZJfinal} is trivially equivalent to
\begin{equation}
    \begin{split}
        Z(\vsig)
        = \prod_{i=1}^{2N} \sqrt{\bar{h}_i(\vsig)}
        = \prod_{\substack{
                i=1,\ldots,2N \\
                \sigma_i = -1
        }} \bar{h}_i(\vsig)
        \, .
    \end{split}
\end{equation}
The proof above has a stronger implication: we can construct the
$m$-th of the $Z_J$ solutions by a polynomial-time algorithm (which
takes on average time $\sim N \log N$ and space $\sim \sqrt{N}$ if the
suitable data structure is used), despite the fact that, as is
apparent from Lemma \ref{lem.ZJ}, the typical values of $Z_J$ are
potentially at least exponential in $N$. The algorithm goes as
follows. First, rewrite $m$ in the form
$m-1=a_1 + a_2 \bar{h}_1 + a_3 \bar{h}_1 \bar{h}_2 + 
\cdots + a_N \bar{h}_1 \bar{h}_2 \cdots \bar{h}_{N-1}$,
with $0 \leq a_j < \bar{h}_j$. Then, say that $i(j)=i$ if the $j$-th
closing point is $p_i$. Now, produce the $N$ pairs of the $m$-th
optimal matching by pairing the closing points, in order of increasing
$j$, by pairing this point to the $a_j$-th of the stack in $i(j)-1$,
when this is sorted (say) in increasing order.

\section{Statistical properties of $S(\sigma)$} \label{sec:StatSN}

We are now interested in the statistical properties of the entropy
\begin{equation}\label{eq:s}
    \begin{split}
        S_N(\vsig) = \log Z(\vsig) = \frac{1}{2} \sum_{i=1}^{2N} \log(\bar{h}_i(\vsig))
         \,
    \end{split}
\end{equation}
when $\vsig(J)$ is a random variable induced by some probability
measure on the space of configurations $J$ of $2N$ points, and $N$
tends to infinity.  The subscript $N$ reminds us that we are at size
$|W|=|B|=N$.

In matching problems, the typical choices for the configurational
probability measure are factorized over a measure on the spacings and
a measure on the color orderings, i.e.
\begin{equation}
    \begin{split}
\mu(J) = \mu_{\rm spacing}(\vec{s}(J)) \; \mu_{\rm color}(\vsig(J)) \, 
    \end{split}
\end{equation}
(see \cite{caracciolo2020DyckBoundConcave} for more details and
examples).  As $S_N(J) = S_N(\vsig(J))$, we can again forget about the
spacing degrees of freedom, and study the statistics of $S_N(\vsig)$
induced by some measure $\mu_{\rm color}(\vsig)$.  In particular, we
will study the cases in which $\vsig$ is uniformly drawn from the set
of Dyck paths, or uniformly drawn from the set of Dyck bridges.

\subsection{Integral formulas for the integer moments of $S(\sigma)$ via Wiener processes}
\label{sec:integral}

It is well known (see Donsker's theorem \cite{donsker1951invariance})
that lattice paths such as Dyck paths and bridges converge, as
$N\rightarrow\infty$ and after a proper rescaling, to Brownian bridges
and Brownian excursions.  Brownian bridges are Wiener processes
constrained to end at null height, while Brownian excursions are
Wiener processes constrained to end at null height and to lie in the
upper half-plane.  The correct rescaling of the steps of the lattice
paths that highlights this convergence is given by $(+1,\pm1)
\rightarrow \left( +\frac{1}{N}, \pm\frac{1}{\sqrt{N}} \right) $.

These scalings suggest to consider a rescaled version of the entropy 
\begin{equation}
    \begin{split}
        s(\vsig) 
        = \frac{S_N(\vsig) - \frac{1}{2}N\log N}{N} 
        = \frac{1}{2N} \sum_{i=1}^{2N} \log\left(\frac{\bar{h}_i(\vsig)}{\sqrt{N}}\right)
        \, .
    \end{split}
\end{equation}
In the limit $N\rightarrow\infty$, the rescaled entropy will converge to an integral operator over Wiener processes
\begin{equation}
    \begin{split}
        s[\vsig] = \int_0^1 dt \, \log\left( |\vsig(t)| \right) 
    \end{split}
\end{equation}
where $\vsig(x)$ is a Brownian bridge/excursion.

The integer moments of $s[\vsig]$ can be readily computed as correlation functions of the Brownian process:
\begin{equation} \label{eq:WienerGeneral}
    \begin{split}
        \langle (s[\vsig])^k \rangle_{\rm B/E}
        &= \int 
\mathcal{D}_{\rm B/E}[\vsig] \int_0^1 dt_1 \dots dt_k \prod_{a=1}^k \log\left( |\vsig(t_a)| \right) \\
        &= k! \int_{\Delta_k} dt_1 \dots dt_k \int_{\mathbb{R}} dx_1 \dots dx_k \prod_{a=1}^k \log\left( |x_a| \right)
        \int 
\mathcal{D}_{\rm B/E}[\vsig] \prod_{a=1}^k \delta( \vsig(t_a) - x_a )
         \, ,
    \end{split}
\end{equation}
where $\Delta_k \subset \mathbb{R}^k$ is the canonical symplex 
$\{0 = t_0 < t_1 < t_2 < \cdots < t_k < t_{k+1} = 1\}$, and
$\mathcal{D}_{\rm B/E}[\vsig]$ is the standard measure on the Brownian
process of choice among bridges and excursions.  The last integral is
the probability that the Brownian process we are interested in starts
an ends at the origin and visits the points
$(t_1,x_1),\dots,(t_k,h_k)$, while subject to its constraints.

Let us consider Brownian bridges first. In this case, the probability
that a Wiener process travels from $(t_i,x_i)$ to $(t_f,x_f)$ is given
by $\cN( x_f-x_i | 2(t_f-t_i))$ where $\cN(x | \sigma^2 )$ is the
p.d.f.\ of a centered Gaussian distribution with variance $\sigma^2$.
The factor $2$ comes from Donsker's theorem, and is due to the fact
that the variance of the distribution of the steps in the lattice
paths is exactly 2.  Thus, for Brownian bridges
\begin{equation}
    \begin{split}
        \int \mathcal{D}_{\rm B}[\vsig] \prod_{a=1}^k \delta( \vsig(t_a) - x_a ) 
        = \frac{\sqrt{4 \pi}}{\prod_{a=0}^{k} \sqrt{4 \pi (t_{a+1} - t_{a} ) }} \exp \left[ - \sum_{a=0}^k \frac{(x_{a+1} - x_{a})^2}{4(t_{a+1} - t_{a})}  \right] 
         \, 
    \end{split}
\end{equation}
where $x_0=0$, $x_{k+1}=0$ and the factor $\sqrt{4 \pi}$ is a normalization, 
so that
\begin{equation}\label{eq:integralB}
    \begin{split}
        \langle (s[\vsig])^k \rangle_{\rm B}
        &= k! \int_{\Delta_k} dt_1 \dots dt_k \int_{\mathbb{R}} dx_1 \dots dx_k \frac{\sqrt{4\pi}\prod_{a=1}^k \log\left( |x_a| \right)}{\prod_{a=0}^{k} \sqrt{4 \pi (t_{a+1} - t_{a} ) }} \exp \left[ - \sum_{a=0}^k \frac{(x_{a+1} - x_{a})^2}{4(t_{a+1} - t_{a})}  \right] 
\, .
    \end{split}
\end{equation}
Brownian excursions can be treated analogously using the reflection principle.
In this case, the conditional probability that a Wiener process travels from
$(t_i,x_i)$ to $(t_f,x_f)$ without ever reaching negative heights, given that it already reached $(t_i ,x_i )$, is
given by 
$\cN( x_f-x_i | 2 \Delta (t_f - t_i) ) - \cN( x_f+x_i | 2 ( t_f -t_i ) )$
for $x_{i,f} > 0$, while for $x_{i}=0$ and $x_f = x$ (or viceversa) it equals 
$\frac{|x|}{2 (t_f -t_i  )} \cN(x| 2 (t_f -t_i ))$.
Moreover, now all $x_i$'s are constrained to be positive.
Thus, for Brownian excursions
\begin{equation}\label{eq:integralE}
    \begin{split}
        \langle (s[\vsig])^k \rangle_{\rm E}
        &= k! \int_{\Delta_k} dt_1 \dots dt_k \int_{[0,+\infty)} dx_1 \dots dx_k \frac{\sqrt{4\pi} x_1 x_k \prod_{a=1}^k \log\left( x_a \right)}{t_1 (1-t_k) \prod_{a=0}^{k} \sqrt{4 \pi (t_{a+1} - t_{a} ) }} 
\\
        &\quad\times
        \exp\left[ -\frac{x_1^2}{4 t_1} -\frac{x_k^2}{4 (1- t_k)} \right] 
        \prod_{a=1}^{k-1} \left\{ \exp \left[ -  \frac{(x_{a+1} - x_{a})^2}{4(t_{a+1} - t_{a})}  \right]  - \exp \left[ - \frac{(x_{a+1} + x_{a})^2}{4(t_{a+1} - t_{a})}  \right]  \right\}
\, .
    \end{split}
\end{equation}

In both cases, the Gaussian integrations on the heights $x_i$ can be explicitly performed. 
    First of all, we replace
    \begin{equation}
        \begin{split}
            \log|x_a |  = \frac{1}{2} \partial_{\kappa_a} \left[ x_a ^{2\kappa_a} \right]_{\kappa_a =0} 
            \, .
        \end{split}
    \end{equation}
    Then, we treat the contact terms.
    In the case of bridges, the contact terms can be rewritten as
    \begin{equation}
        \begin{split}
            \exp \left[ \frac{x_{a+1} x_a }{2( t_{a+1} -t_a  )} \right] 
            = \cosh\left( \frac{x_{a+1} x_a }{2( t_{a+1} -t_a  )}  \right)
            \, ,
        \end{split}
    \end{equation}
    where the hyperbolic sine term is discarded due to the parity of the rest of the integrand in the variables $x_a$.
    In the case of excursions, the contact term instead reads
    \begin{equation}
        \begin{split}
            \exp \left[ \frac{x_{a+1} x_a }{2( t_{a+1} -t_a  )} \right] -
            \exp \left[ - \frac{x_{a+1} x_a }{2( t_{a+1} -t_a  )} \right] 
            = 2\sinh\left( \frac{x_{a+1} x_a }{2( t_{a+1} -t_a  )}  \right)
            \, .
        \end{split}
    \end{equation}
    In both cases, we can expand the hyperbolic function in power-series, so that the integrations in the $x_a$ variables are now factorized and of the kind
    \begin{equation}
        \begin{split}
            \int_{\mathbb{R}} dx \, x^{2k} \exp\left[ -\frac{x^2}{\lambda} \right] = \Gamma\left( k+\frac{1}{2} \right) \lambda^{k+\frac{1}{2}} 
            \, 
        \end{split}
    \end{equation}
    (in the case of excursions, a factor $1/2$ must be added to take into account the halved integration domain).
    
    Using these manipulations, the first two moments for both bridges and excursions can be analytically computed. We detail the computations in Appendix~\ref{app:momint}. The results are:
    \begin{equation}\label{eq:M1wiener}
    \begin{split}
        \langle s[\vsig] \rangle_{\rm B} &= -\frac{\gamma_E+2}{2} \\ 
        \langle s[\vsig] \rangle_{\rm E} &= -\frac{\gamma_E}{2} \, ,
    \end{split}
\end{equation}
where $\gamma_E$ is the Euler--Mascheroni constant,
and
\begin{equation}
    \begin{split}
        \left\langle (s[\vsig])^2 \right\rangle_{\rm B}  &= \frac{4}{3}+\gamma_E +\frac{\gamma_E ^2}{4}-\frac{\pi ^2}{72}\\
        \left\langle (s[\vsig])^2 \right\rangle_{\rm E}  &= \frac{\gamma_E ^2}{4}+\frac{5 \pi ^2}{24}-2
         \, .
    \end{split}
\end{equation}

The approach presented in this Section is simple in spirit, and allows
to connect our problem to the vast literature on Wiener processes.
Moreover, it is suitable for performing Monte Carlo numerical
integration to retrieve the moments of $s(\vsig)$.  

In this Section we worked directly in the continuum limit. 
In the next Section, we provide a combinatorial approach that allows to recover the values of the first two moments in a discrete setting, and to compute finite-size corrections in the limit $N\to\infty$.


\subsection{Combinatorial properties of the integer moments of $S(\sigma)$ at finite $N$}
\label{sec:combinatorial}

In this section, we introduce a combinatorial method to compute the
moments of $S_N(\vsig)$ in the limit $N\rightarrow\infty$.  This new
approach allows to retain informations on the finite-size corrections.

The underlying idea is to reproduce Equation~\eqref{eq:WienerGeneral}
in the discrete setting for the variable $S_N(\vsig)$, and to study
its large-$N$ behaviour using methods from analytic combinatorics.

We start again from
\begin{equation}
    \begin{split}
        S_N(\vsig) = \sum_{\substack{i=1\dots 2N\\\sigma_i=-1}} \log(\bar{h}_i(\vsig))
         \,
    \end{split}
\end{equation}
In the following, the superscript/subscript $\rm T=E,B$ will stand for
Dyck paths (excursions, E) and Dyck bridges (B) respectively, $T_N =
C_N,B_N$ and $\mathcal{T}_N = \mathcal{C}_N, \mathcal{B}_N$; we will
mantain the notation unified whenever possible.

The $k$-th integer moment equals
\begin{equation}\label{eq:Mk}
    \begin{split}
        \langle S_N(\vsig) \rangle_{\rm T} 
        &:= 
        M_{N,k}^{\rm (T)} 
        = \frac{1}{T_N} \sum_{\vsig \in \mathcal{T}_N} \left[ S_N(\vsig) \right]^k
        = \frac{k!}{T_N} 
        \sum_{\vsig \in \mathcal{T}_N} 
        \sum_{\substack{
1 \leq t_1 , \dots, t_k \leq 2N \\ 
\sigma_{t_1} = \dots = \sigma_{t_k} = -1} } 
\prod_{a=1}^k \log(\barh_{t_a}(\vsig))
        \\
        &= k!
        \sum_{c=1}^k
        \sum_{
            \substack{
                1 \leq t_1 < t_2 < \cdots < t_c \leq 2N
                \\
                \nu_1,\ldots,\nu_c \geq 1
\\
                \nu_1 + \cdots = \nu_c = k
                \\
                \bar{h}_1, \ldots, \bar{h}_c >0
        }}
        \prod_{a=1}^c
        \left( \frac{
                \left( \log \bar{h}_a \right)^{\nu_a}
            }{\nu_a !}
        \right)
        \frac{\cM_N^{\rm (T)}(t_1,\cdots,t_c;\bar{h}_1,\cdots,\bar{h}_c)}{T_N}
        \, 
    \end{split}
\end{equation}
where 
$\mathcal{M}^{\rm (T)}_N(t_1, \dots, t_c;\barh_1, \dots, \barh_c)$ is
the number of paths of type $\rm T$ that has closing steps at
horizontal positions $t_1, \dots t_c$, and at heights $h_1 = \pm
(\barh_1 - 1/2), \dots, h_c = \pm (\barh_c - 1/2)$.

The last equation reproduces, as anticipated earlier,
Equation~\eqref{eq:WienerGeneral} in the discrete setting.  Notice
that here we must take into account the multiplicities $\nu_a$, while
in the continuous setting we could just set $c=k$ and $\nu_a=1$ for
all $a$, as the contribution from the other terms is washed out in the
continuum limit.
This suggests that, in this more precise approach, we will verify
explicitly that the leading contributions in the large-$N$ limit
comes from the $c=k$ term of Equation~\eqref{eq:Mk}.

In order to study Equation~\eqref{eq:Mk}, we take the following route.
As this equation depends on $N$ only implicitly through the
summation range, and explicitly through a normalization, we would like
to introduce a generating function
\begin{equation}
    \label{eq:Mkz}
    M_{k}^{\rm (T)}(z) = \sum_{N \geq 1} z^N T_N M_{N,k}^{\rm (T)} 
\ef 
\end{equation}
that will decouple the summation range over the variables
$t_{i+1}-t_i$.  By singularity
analysis~\cite{flajolet2009AnalyticCombinatorics}, the asymptotic
expansion for $N \rightarrow \infty$ of $M^{\rm (T)}_{N,k}$ will be
then retrieved by the singular expansion of $M_k^{\rm (T)}(z)$ around
its dominant singularity.

We start by giving an explicit form for $\mathcal{M}^{\rm (T)}_N$ for Dyck paths and Dyck bridges.
\begin{proposition}\label{prop:MNth}
In the case of Dyck bridges, we have
\begin{equation}\label{eq:MBcursive}
    \begin{split}
        &\mathcal{M}^{\rm (B)}_N (t_1,\cdots,t_c;\bar{h}_1,\cdots,\bar{h}_c)
        \\
        &\quad= 
        2 B_{t_1-1,\barh_1} 
        \left( B_{t_2 - t_1 -1, \barh_2 - (\barh_1 -1)} + B_{t_2 - t_1 -1, \barh_2 + (\barh_1 -1)}  \right)\cdots
        \\
        &\qquad
        \cdots
        \left( B_{t_c - t_{c-1} -1, \barh_c - (\barh_{c-1} -1)} + B_{t_c - t_{c-1} -1, \barh_c + (\barh_{c-1} -1)}  \right) 
        B_{2N - t_c, \barh_c -1}
        \, ,
    \end{split}
\end{equation}
where
\begin{equation}
    \begin{split}
        B_{a,b} = 
        \begin{cases}
            \binom{a}{\frac{a+b}{2}} & \text{if } a,b \in \mathbb{Z}^+ \text{ and } a+b \text{ is even} \\
            0 & \text{otherwise}
        \end{cases}
    \end{split}
\end{equation}
is the number of unconstrained paths that start at $(x,y)$ and end at $(x+a,y+b)$.

In the case of Dyck paths, we have
\begin{equation}\label{eq:MEcursive}
    \begin{split}
        &\mathcal{M}^{\rm (E)}_N (t_1,\cdots,t_c;\bar{h}_1,\cdots,\bar{h}_c)
        \\
        &\quad
        = 
        C_{t_1-1,\barh_1,0} \, C_{t_2-t_1-1,\barh_2 - (\barh_1-1), \barh_1-1}
        \cdots\\
        &\qquad\cdots
        C_{t_c-t_{c-1}-1, \barh_c - (\barh_{c-1} -1), \barh_{c-1}-1}
        C_{2N-t_c, -(\barh_c-1), \barh_c-1} 
    \end{split}
\end{equation}
where
\begin{equation}
    \begin{split}
        C_{a,b,d} = 
        \left( B_{a,b} - B_{a,b+2(d+1)}  \right)\theta(b+d) \, \qquad a,b,d \in \mathbb{Z}^+ \, ,
    \end{split}
\end{equation}
is the number of paths that start at $(x,y)$, end at $(x+a,y+b)$ and never fall below height $y-d$, and 
$\theta(x)=1$ for $x\geq0$ and zero otherwise, 
\end{proposition}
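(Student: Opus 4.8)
The plan is to count paths by decomposing them at the closing steps $t_1 < \dots < t_c$. Recall from the proof of Lemma \ref{lem.ZJ} that a closing step at horizontal position $t_a$ is one where $h_{t_a}\sigma_{t_a} < 0$, i.e.\ a step that moves the path \emph{towards} the $x$-axis; its height parameter is $\bar h_a = |h_{t_a}| + \tfrac12$. The key observation is that such a step, together with the requirement that it be a closing step, is determined by the position of the path just before and just after it: if $\bar h_a$ is the stack size, the midpoint height is $\pm(\bar h_a - \tfrac12)$, and the step goes from ordinate $\pm\bar h_a$ (at integer position $t_a - 1$) to ordinate $\pm(\bar h_a - 1)$ (at integer position $t_a$), with a single global choice of sign for the whole ``block'' between consecutive closing steps (the sign can only flip when the stack empties, i.e.\ when $\bar h_a - 1 = 0$, but then $\pm 0$ is the same point, so the formula is sign-agnostic there).

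First I would treat the bridge case. A Dyck bridge with the prescribed closing data factorizes as: an arbitrary lattice path from $(0,0)$ to $(t_1-1, \pm \bar h_1)$; the mandatory closing step down to $(t_1, \pm(\bar h_1-1))$; then for each $a$ an arbitrary path from $(t_a, \pm(\bar h_a - 1))$ to $(t_{a+1}-1, \pm \bar h_{a+1})$ followed by a closing step; and finally an arbitrary path from $(t_c, \pm(\bar h_c - 1))$ back to $(2N, 0)$. Since $\bar h_c - 1$ is the absolute ordinate, the last factor is $B_{2N-t_c,\,\bar h_c - 1}$. For the middle factors, the absolute ordinate at position $t_a$ is $\bar h_a - 1$ and at position $t_{a+1}-1$ is $\bar h_{a+1}$, but the two signs are independent across different blocks \emph{only} insofar as the path is unconstrained; what matters is that $|{\pm}\bar h_{a+1} - ({\pm}(\bar h_a-1))| \in \{\bar h_{a+1} - (\bar h_a - 1),\ \bar h_{a+1} + (\bar h_a - 1)\}$ depending on whether the two signs agree or disagree — and $B_{a,b}$ depends only on $|b|$ — which is exactly the sum $B_{t_{a+1}-t_a-1,\,\bar h_{a+1}-(\bar h_a-1)} + B_{t_{a+1}-t_a-1,\,\bar h_{a+1}+(\bar h_a-1)}$ appearing in \eqref{eq:MBcursive}. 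The leading factor $2$ accounts for the overall reflection freedom of the first block (the path from the origin can first go up or down), which is not already captured because $B_{t_1-1,\bar h_1}$ as written counts paths to ordinate $+\bar h_1$ only; more carefully, one notes $B_{t_1-1,\bar h_1}=B_{t_1-1,-\bar h_1}$ and the two sign choices for the sign of the whole first block give the factor $2$.

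For the excursion case the decomposition is identical, but each arbitrary lattice path is replaced by a path constrained to stay $\geq 0$. Between position $t_a$ (ordinate $\bar h_a - 1$) and position $t_{a+1}-1$ (ordinate $\bar h_{a+1}$), the constraint ``never below $0$'' translates, after shifting the starting ordinate to $0$, to ``never below $-(\bar h_a - 1)$'', and a net displacement $\bar h_{a+1} - (\bar h_a - 1)$; this is precisely $C_{t_{a+1}-t_a-1,\,\bar h_{a+1}-(\bar h_a-1),\,\bar h_a-1}$ by the definition of $C_{a,b,d}$ via the reflection principle $B_{a,b}-B_{a,b+2(d+1)}$. The first block is a path from $0$ to $\bar h_1$ staying $\geq 0$ (so $d=0$), and the last block goes from $\bar h_c - 1$ down to $0$ with net displacement $-(\bar h_c - 1)$ staying $\geq -(\bar h_c - 1)$, i.e.\ $C_{2N-t_c,\,-(\bar h_c - 1),\,\bar h_c - 1}$; no overall factor $2$ appears since excursions have no reflection freedom. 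I would also check the edge cases $\bar h_a = 1$ (empty stack) and the $\theta$ factors that kill ill-defined terms.

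The main obstacle I anticipate is bookkeeping the signs correctly in the bridge case — in particular, justifying cleanly why the ``sign of a block'' is a well-defined notion that changes only through ordinate $0$, and why summing over the two relative signs of adjacent blocks gives exactly the two-term sums in \eqref{eq:MBcursive} rather than overcounting the configurations where an intermediate block sits at height $0$. A careful statement is that the correspondence between sign-patterns and paths is genuinely a bijection once one fixes, say, the sign of the first nonempty block; the apparent overcount at zero-height blocks is harmless because there $B_{a,0}=B_{a,0}$ collapses the two terms but the path itself is then forced to have passed through the axis, so no distinct path is double-counted. The excursion case is comparatively routine once the reflection-principle formula for $C_{a,b,d}$ is in hand, since all signs are fixed to be positive.
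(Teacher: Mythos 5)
Your proposal is correct and follows essentially the same route as the paper: decompose the (marked) path at the prescribed closing steps, use the sign-independence of $B_{a,b}$ to reduce the sum over the $2^c$ sign patterns to the overall factor $2$ times the product of two-term sums over relative signs, and handle excursions by the reflection principle giving $C_{a,b,d}=B_{a,b}-B_{a,b+2(d+1)}$. Your extra care about the sign bookkeeping (recovering each $\epsilon_a$ from the ordinate at $t_a-1$, and the harmlessness of the $\bar h_a=1$ case) only makes explicit what the paper's terser proof leaves implicit.
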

\noindent
Notice that, while in general the $\theta$ factors are necessary for
the definition of $C_{a,b,d}$ in terms of $B_{a,b}$, in our specific
case they are all automatically satisfied, as $\barh_a \geq 1$ for all
$1\leq a \leq c$.
\begin{proof}
    Let us start by considering Dyck bridges.
    The idea is to decompose a path contributing to the count of 
    $\mathcal{M}^{\rm (B)}_N (t_1,\cdots,t_c;\bar{h}_1,\cdots,\bar{h}_c)$
    around its closing steps:
    \begin{itemize}
        \item the first closing step starts at coordinate
          $\left(t_1-1,\pm \barh_1 \right)$.  There are
          $B_{t_1-1,\barh_1} + B_{t_1-1,-\barh_1} =
          2B_{t_1-1,\barh_1}$ different portions of path joining the
          origin to the starting point of the first closing step.
        \item the $a$-th closing step happens $t_a - t_{a-1}-1$ steps
          after the $(t-1)$-th one, and, based on the relative sign of
          the heights of the two closing steps, their difference in
          height equals $\barh_a - (\barh_{a-1}-1)$ or $\barh_a +
          (\barh_{a-1}-1)$. Thus, there are $B_{t_a - t_{a-1} -1,
            \barh_a - (\barh_{a-1}-1)} + B_{t_a - t_{a-1} -1, \barh_a
            + (\barh_{a-1}-1)}$ different portions of path connecting
          the two closing steps.
        \item the last closing step happens $2N-t_c$ steps before the
          end of the path and at height $h_c = \pm \left( \barh_c -
          \frac{1}{2} \right)$. Thus, there are $B_{2N-t_c,\barh_c-1}$
          portions of path concluding the original path.
    \end{itemize}
    The product of the contribution of each subpath recovers Equation~\eqref{eq:MBcursive}.
    
    The case of Dyck paths can be treated analogously, with a few
    crucial differences.  In fact, each of the portions of path
    between the $i$-th and $(i+1)$-th closing steps (which, for
    excursions, are just down-steps) has now the constraint that it
    must never fall below the horizontal axis, i.e.\ must never reach
    a height $\left( \barh_i -\frac{1}{2} \right)$ lower with respect
    to its starting step.  Let us count these paths.  A useful trick
    to this end is the discrete version of the \emph{reflection method}, that we already used in Section~\ref{sec:integral}.  Call $a$
    the total number of steps, $b$ the relative height of the final
    step with respect to the starting step, and $c$ the maximum fall
    allowed with respect to the starting step.  Moreover, call
    \emph{bad paths} all paths that do not respect the last
    constraint.  A bad path is characterized by reaching relative
    height $-c-1$ at some point (say, the first time after $s$ steps).
    By reflecting the portion of path composed of the first $s$ steps,
    we obtain a bijection between bad paths and unconstrained paths
    that start at relative height $-2(c+1)$, and reach relative height
    $b$ after $a$ steps.  Thus, the total number of good paths
    $C_{a,b,d}$ is given by subtraction as
    \begin{equation}
        \begin{split}
            C_{a,b,d} = B_{a,b} - B_{a,b+2(d+1)}
            \, .
        \end{split}
    \end{equation}
    This line of thought holds for all values of $a,d > 0$ and $b \geq
    -d$; if $b < -d$ we just have $C_{a,b,d} = 0$.  Moreover, by
    properties of $B_{a,b}$, $C_{a,b,d}=0$ if $a+b$ is not an even
    number.

    Equation~\eqref{eq:MEcursive} can be easily established by
    decomposing a generic (marked) path around its closing steps, and
    by applying our result above.
\end{proof}
The fact that we want to exploit now is that, while a given binomial
factor $B_{a,b}$ (and its constrained variant $C_{a,b,d}$) are not easy
to handle exactly, their generating function in $a$ have simple
expressions, induced by analogously simple decompositions, that we
collect in the following:
\begin{proposition}\label{prop:fgenBC}
    \begin{align} \label{eq:Bbz}
        B_b(z) 
        &:= \sum_a z^{\frac{a}{2}} B_{a,b}
        = B(z) (\sqrt{z} C(z))^{|b|}    
        \ef ,
        \\
        \label{eq:Cbdz}
        C_{b,d}(z)
        &:= \sum_a z^{\frac{a}{2}} C_{a,b,d} 
        = B(z) \left[ (\sqrt{z} C(z))^{|b|} - (\sqrt{z} C(z))^{|b+2(d+1)|}   \right] \theta(b+d)
        \ef, 
        \\
        \label{eq:Cbz}
        C_b(z) 
          &:= \sum_a z^{\frac{a}{2}} C_{a,b,0}
          = B(z) \left( 1-zC(z)^2 \right)  (\sqrt{z} C(z))^b \theta(b)
          \ef ,
    \end{align}
    where, as in (\ref{eq.BzCz}),
    \begin{align}
        B(z)
        &=\sum_{k \geq 0} z^{k} B_k 
        = 
        \frac{1}{\sqrt{1-4z}}
        \ef;
        &
        C(z)
        =\sum_{k \geq 0} z^{k} C_k
        = 
        \frac{1-\sqrt{1-4z}}{2z}
        \ef.
    \end{align}
\end{proposition}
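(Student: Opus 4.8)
The plan is to read off all three identities from simple bijective decompositions of lattice paths, treating the unconstrained case first and reducing the constrained ones to it. The single building block I would set up is the generating function $U(z)=\sum_{a}z^{a/2}F_a$, where $F_a$ counts paths of length $a$ that start at height $0$, end at height $1$, and stay at heights $\le 0$ at every step strictly before the last. Stripping the final (necessarily up-)step of such a path leaves a path from $0$ to $0$ of length $a-1$ staying $\le 0$, which after reflection across the horizontal axis is exactly a Dyck path of half-length $(a-1)/2$; hence $U(z)=\sqrt{z}\,C(z)$. (Equivalently, conditioning on the first step gives the functional equation $U=\sqrt z+\sqrt z\,U^2$, which is the Catalan equation and has the same solution.)

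For \eqref{eq:Bbz} I would decompose an arbitrary path $\gamma$ from $0$ to $b$, taking $b\ge 0$ (the case $b<0$ follows by the up--down reflection symmetry, which is the origin of the $|b|$). Since $\gamma$ is a $\pm1$ walk ending at $b>0$, its first-passage times to levels $1,2,\dots,b$ are well defined, and cutting $\gamma$ at them writes it uniquely as a concatenation of $b$ first-passage pieces---each contributing a factor $U(z)$ after translation---followed by an arbitrary bridge from $b$ to $b$, contributing $B(z)$. This gives $B_b(z)=B(z)\,U(z)^{b}=B(z)\bigl(\sqrt z\,C(z)\bigr)^{|b|}$. The only point deserving a comment is that the intermediate pieces are allowed to descend arbitrarily far below their own starting level; but that is precisely what $U$ enumerates, so the concatenation map really is a bijection.

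For \eqref{eq:Cbdz} the first-passage device degenerates once the floor $-d$ is imposed, so instead I would use the reflection identity $C_{a,b,d}=B_{a,b}-B_{a,b+2(d+1)}$, valid for $b\ge -d$ (the same reflection argument as in the proof of Proposition~\ref{prop:MNth}, which applies verbatim for $d=0$ as well, as is needed for \eqref{eq:Cbz}), and sum it against $z^{a/2}$. This yields $C_{b,d}(z)=B_b(z)-B_{b+2(d+1)}(z)$ on the range $b+d\ge 0$ and $0$ otherwise, which is the asserted formula once one notes that $b+2(d+1)>0$ throughout that range, so the absolute-value bars in the statement are harmless there, and that the factor $\theta(b+d)$ simply records the vanishing outside it. Finally \eqref{eq:Cbz} is the specialization $d=0$ of \eqref{eq:Cbdz}: factoring $(\sqrt z\,C(z))^{b}$ out of $(\sqrt z\,C(z))^{b}-(\sqrt z\,C(z))^{b+2}$ leaves the prefactor $B(z)\bigl(1-zC(z)^2\bigr)$. (If one prefers, the Catalan functional equation $C=1+zC^2$ together with $2-C=\sqrt{1-4z}\,C=C/B$ collapses this prefactor to $C(z)$, but this is cosmetic.)

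I do not expect a genuine obstacle: the proposition is a bookkeeping exercise in classical lattice-path enumeration. The two places that call for a little care are (i) pinning down $U=\sqrt z\,C(z)$ correctly, including the parity shift between a first-passage path, which has odd length, and the even-length Dyck path obtained after stripping and reflecting, so that the half-step weighting $z^{a/2}$ comes out consistently; and (ii) keeping track of the parity, nonnegativity and $\theta$-conventions baked into the definitions of $B_{a,b}$ and $C_{a,b,d}$ (the constraints $a\equiv b\bmod 2$, $a\ge |b|$, $b\ge -d$), so that these are respected when passing from the coefficientwise identities to the generating functions.
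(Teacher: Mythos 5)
Your proof is correct and follows essentially the same route as the paper's: \eqref{eq:Bbz} is obtained by a bijective factorization of the walk into $|b|$ level-crossing pieces, each worth $\sqrt{z}\,C(z)$, times one free bridge worth $B(z)$ (you cut at first-passage times with the bridge at the end, the paper cuts at the rightmost up-step of each level with the bridge at the start --- a time-reversal of the same decomposition), and \eqref{eq:Cbdz}, \eqref{eq:Cbz} follow from the reflection identity $C_{a,b,d}=\left(B_{a,b}-B_{a,b+2(d+1)}\right)\theta(b+d)$ and its $d=0$ specialization, exactly as in the paper. Even your parenthetical remark that the prefactor $B(z)\left(1-zC(z)^2\right)$ collapses to $C(z)$ mirrors the paper's observation on the equivalence of the two decompositions for \eqref{eq:Cbz}.
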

\begin{proof}    
    To obtain Equation~\eqref{eq:Bbz}, observe that a path going from
    $(0,0)$ to $(a,b)$ with non-negative $b$ can be uniquely
    decomposed as $w = w_{0} u_{1} w_{1} u_{2} w_{2} \dots u_{h}
    w_{h}$ where $u_{i}$ is the right-most up-step of $w$ at height
    $i-1/2$, and $w_{i}$ is a (possibly empty) Dyck path, for all
    $i=1,\dots,h$, while $w_{0}$ is a (possibly empty) Dyck bridge.
    Thus,
    \begin{equation}
        \begin{split}
            B_{a,b} = \sum_{\substack{\ell_0 , \dots, \ell_b \geq 0 \\ 2 \sum_{i=0}^b \ell_i + b = a }} B_{\ell_0} C_{\ell_1} \cdots C_{\ell_b}
            \, .
        \end{split}
    \end{equation}
    For negative $b$, the same reasoning holds with $u_i$'s replaced
    by down-steps, hence the absolute value on $b$ in the result.
    Equation~\eqref{eq:Bbz} then follows easily.

    Equation~\eqref{eq:Cbdz} follows from $C_{a,b,d} = \left( B_{a,b} - B_{a,b+2(d+1)} \right) \theta(b+d)$.

    Equation~\eqref{eq:Cbz} can be derived either as a special case of
    Equation~\eqref{eq:Cbdz}, or as a variation of $\eqref{eq:Bbz}$
    where $w_0$ must be a Dyck path.  The equivalence of these two
    decompositions is granted by the fact that $C(z) = \left( 1-z
    C(z)^2 \right) B(z)$.
\end{proof}

Let us introduce the symbol $x=x(z)$ for the recurrent quantity
\begin{equation}
    x(z) = z C(z)^2 = C(z)-1
\end{equation}
which, if used to parametrise the other relevant quantities, gives
\begin{align}
    z(x) &= \frac{x}{(1+x)^2}
    \ef;
      &
    B(z(x)) &= \frac{1+x}{1-x}
    \ef.
\end{align}
Then, Equation~\eqref{eq:Mkz} reads
\begin{equation}\label{eq:Mkz2}
    \begin{split}
        M_{k}^{\rm (T)}(z)
&= k!
\sum_{c=1}^k
\sum_{
    \substack{
        \nu_1,\ldots,\nu_c \geq 1\\
        \bar{h}_1, \ldots, \bar{h}_c >0\\
        \sum_a \nu_a=k
}}
\prod_{a=1}^c
\left( \frac{
        \left( \log \bar{h}_a \right)^{\nu_a}
    }{\nu_a !}
\right)
\cM^{\rm (T)}(z;\bar{h}_1,\cdots,\bar{h}_c)
\ef,
\end{split}
\end{equation}
where
\begin{equation}
    \begin{split}
        \cM^{\rm (T)}(z;\barh_1, \cdots, \barh_c) 
        = 
        \sum_{N\geq0}z^N 
        \sum_{1 \leq t_1 < t_2 < \cdots < t_c \leq 2N} \cM^{\rm (T)}_N(t_1,\cdots,t_c; \barh_1, \cdots, \barh_c)
        \, .
    \end{split}
\end{equation}
\begin{proposition}
    Using $x$ to denote $x(z)$, we have that for bridges
    \begin{multline}
        \label{eq:Mkz2a}
        \quad
        \cM^{\rm (B)}(z;\bar{h}_1,\cdots,\bar{h}_c)
        =
        2 z^{\frac{c}{2}}
        B(z)^{c+1} \sqrt{x}^{\,\bar{h}_1}
        \big(
            \sqrt{x}^{\,|\bar{h}_2-\bar{h}_1+1|}
            +
            \sqrt{x}^{\,\bar{h}_2+\bar{h}_1-1}
        \big)
        \\
        \cdots
        \big(
            \sqrt{x}^{\,|\bar{h}_c-\bar{h}_{c-1}+1|}
            +
            \sqrt{x}^{\,\bar{h}_c+\bar{h}_{c-1}-1}
        \big)
        \sqrt{x}^{\,\bar{h}_c-1}
        \ef,
        \quad
    \end{multline}
    and for excursions
    \begin{multline}
        \label{eq:Mkz2b}
        \quad
        \cM^{\rm (E)}(z;\bar{h}_1,\cdots,\bar{h}_c)
        =
        z^{\frac{c}{2}}
        B(z)^{c+1} (1-x) \sqrt{x}^{\,\bar{h}_1}
        \big(
            \sqrt{x}^{\,|\bar{h}_2-\bar{h}_1+1|}
            -
            \sqrt{x}^{\,\bar{h}_2+\bar{h}_1+1}
        \big)
        \\
        \cdots
        \big(
            \sqrt{x}^{\,|\bar{h}_c-\bar{h}_{c-1}+1|}
            -
            \sqrt{x}^{\,\bar{h}_c+\bar{h}_{c-1}+1}
        \big)
        (1-x)
        \sqrt{x}^{\,\bar{h}_c-1}
        \ef.
        \quad
    \end{multline}
\end{proposition}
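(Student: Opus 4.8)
The plan is to derive both identities by passing to generating functions in the product formulas of Proposition~\ref{prop:MNth}. The key structural observation is that a marked path counted by $\cM_N^{\rm (T)}(t_1,\dots,t_c;\barh_1,\dots,\barh_c)$ is the concatenation of its $c$ closing steps with $c+1$ independently varying subpaths, so that the sum over $N$ and over the closing positions $t_1<\cdots<t_c$ factorizes into a product of one-variable sums, each of which is one of the generating functions collected in Proposition~\ref{prop:fgenBC}.

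Concretely, I would first record the bookkeeping for the subpath lengths. In the decomposition behind Proposition~\ref{prop:MNth}, the length arguments of the binomial factors are $t_1-1$, the gaps $t_a-t_{a-1}-1$ for $2\le a\le c$, and $2N-t_c$; these $c+1$ lengths, together with the $c$ closing steps, sum to $2N$, so $z^N$ distributes as $z^{c/2}$ times one factor $z^{\mathrm{length}/2}$ per subpath. As $(N;t_1,\dots,t_c)$ ranges over all admissible values, the tuple of subpath lengths ranges freely over $(\mathbb{Z}^+)^{c+1}$, the wrong-parity terms being automatically annihilated by the vanishing of $B_{a,b}$ when $a+b$ is odd. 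Hence $\cM^{\rm (T)}(z;\barh_1,\dots,\barh_c)$ equals $z^{c/2}$ times the product, over subpaths, of the corresponding one-variable generating function.

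For bridges, the leftmost subpath contributes $2B_{\barh_1}(z)$, the subpath between the $(a-1)$-th and $a$-th closing steps contributes $B_{\barh_a-\barh_{a-1}+1}(z)+B_{\barh_a+\barh_{a-1}-1}(z)$, and the rightmost subpath contributes $B_{\barh_c-1}(z)$. Substituting $B_b(z)=B(z)(\sqrt z\,C(z))^{|b|}$ together with the identity $\sqrt z\,C(z)=\sqrt x$ (immediate from $x=zC(z)^2$), pulling out the $c+1$ copies of $B(z)$ and the prefactor $2z^{c/2}$, and dropping the absolute values on $\barh_a+\barh_{a-1}-1$ and on $\barh_c-1$ (legitimate since $\barh_a\ge1$ throughout), one obtains Equation~\eqref{eq:Mkz2a}. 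For excursions, the leftmost subpath contributes $C_{\barh_1}(z)$, the subpath between the $(a-1)$-th and $a$-th closing steps contributes $C_{\barh_a-\barh_{a-1}+1,\,\barh_{a-1}-1}(z)$, and the rightmost contributes $C_{-(\barh_c-1),\,\barh_c-1}(z)$. Plugging in the formulas of Proposition~\ref{prop:fgenBC}, one checks that the shifted exponent $|b+2(d+1)|$ equals $\barh_a+\barh_{a-1}+1$ for the internal factors and $\barh_c+1$ for the last, that every $\theta(b+d)$ evaluates to $1$ because $b+d\ge0$, that $1-zC(z)^2=1-x$, and that the two end factors each produce a $(1-x)$ through $\sqrt x^{\,b}-\sqrt x^{\,b+2}=(1-x)\sqrt x^{\,b}$; collecting these gives Equation~\eqref{eq:Mkz2b}.

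I do not expect a serious obstacle here. The delicate part — the sign bookkeeping for the closing-step heights, namely the factor $2$ for bridges and the alternatives $|\barh_a-\barh_{a-1}+1|$ versus $\barh_a+\barh_{a-1}-1$ according to whether consecutive closing steps sit at equal or opposite heights — has already been carried out in Proposition~\ref{prop:MNth}, so the present step is essentially transcription followed by routine simplification using $\barh_a\ge1$. The one point that genuinely requires care is justifying the decoupling: that summing over $N$ and over all $t_1<\cdots<t_c$ is the same as summing each subpath length independently over the nonnegative integers, with the parity constraint enforced for free by the definition of $B_{a,b}$.
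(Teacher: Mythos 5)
Your proposal is correct and follows essentially the same route as the paper: decompose the marked path around its closing steps, change variables from $(N;t_1,\dots,t_c)$ to the $c+1$ independent subpath lengths so that $z^N$ factors as $z^{c/2}$ times one weight per subpath, and then substitute the one-variable generating functions of Proposition~\ref{prop:fgenBC} using $\sqrt{z}\,C(z)=\sqrt{x}$ and $1-zC(z)^2=1-x$. The point you single out as delicate — that the constrained sum over $N$ and the $t_a$ is equivalent to a free sum over the subpath lengths — is precisely the change of summation variables the paper uses to "untangle" the sums.
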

\begin{proof}
    First of all, we notice that
    \begin{equation}
        \begin{split}
            &\cM^{\rm (T)}_N(t_1, \cdots, t_c; \barh_1, \cdots, \barh_c) \\
            &\quad= 
            f_1(t_1-1;\barh_1)f_2(t_2-t_1-1;\barh_2,\barh_1)\cdots f_c(t_c-t_{c-1}-1;\barh_c,\barh_{c-1}) f_{c+1}(2N-t_c;\barh_c)
            \, 
        \end{split}
    \end{equation}
    for some functions $f_i$ that depend on the type of paths $\rm T$ that we are studying.
    Thus, by performing the change of summation variables $\{t_1,\cdots,t_c,N\} \rightarrow \{\alpha_1, \cdots, \alpha_{c+1}\}$ such that
    \begin{equation}
        \begin{split}
            &\alpha_1 = t_1-1 \, , \\
            &\alpha_i = t_i - t_{i-1} -1 \ef , \qquad 2 \leq i \leq c \, ,\\
            &\alpha_{c+1} = 2N-t_c \, ,
            \, 
        \end{split}
    \end{equation}
    we have that
    \begin{equation}
        \begin{split}
            &\cM^{\rm (T)}(z;\barh_1, \cdots, \barh_c) 
            \\
            &\quad= 
            \sum_{N\geq0}z^N 
            \sum_{1 \leq t_1 < t_2 < \cdots < t_c \leq 2N} \cM^{\rm (T)}_N(t_1,\cdots,t_c; \barh_1, \cdots, \barh_c)
            \\
            &\quad=
            z^{c/2}\sum_{\alpha_1, \cdots, \alpha_{c+1} \geq 0}
            f_1(\alpha_1;\barh_1)z^{\alpha_1/2}
            \cdots
            f_{c}(\alpha_{c};\barh_c,\barh_{c-1})z^{\alpha_2/2}
            f_{c+1}(\alpha_{c+1};\barh_c)z^{\alpha_{c+1}/2}
            \, ,
        \end{split}
    \end{equation}
    so that all summations are now untangled.
    Equations~\eqref{eq:Mkz2a} and~\eqref{eq:Mkz2b} can now be recovered by using the explicit form of the functions $f_i$ for Dyck paths and Dyck bridges given in Proposition~\ref{prop:MNth}, and the analytical form for the generating functions given in Proposition~\ref{prop:fgenBC}.
    Again, notice that the $\theta$ functions are all automatically satisfied as $\barh_i \geq 1$ for all $1 \leq i \leq c$.
\end{proof}

At this point, we have obtained a quite explicit expression for $M^{\rm (T)}_k(z)$.
In the following sections, we will study the 
behaviour near the leading singularities of the quantities above,
for the first two moments, i.e.\ $k=1,2$.
Higher-order moments require a more involved computational machinery that will be presented elsewhere.

\subsubsection{Singularity analysis for $k=1$}

We start our analysis from the simplest case, $k=1$, to illustrate how singularity analysis is applied in this context. 
We expect to recover Equation~\eqref{eq:M1wiener}.
From now on, for simplicity, as the $\barh$ indices are mute summation
indices, we will call them simply $h$.
We have
\begin{equation}
    \begin{split}
        M_1^{\rm (B)}(z) &=
        2 \sqrt{z} B(z)^2 
        \sum_{h_1 \geq 1}
        \left(\log h_1 \right)
        \sqrt{x}^{\,2h_1-1}
        =
        \frac{2(1+x)}{(1-x)^2}
        \sum_{h \geq 1}
        \log h
        \;
        x^h
        =
        \frac{2(1+x)}{(1-x)^2} \Li_{0,1}(x) \, ,
    \end{split}
\end{equation}
and
\begin{equation}
    \begin{split}
        M_1^{\rm (E)}(z) &=
        \sqrt{z} B(z)^2 
        (1-x)^2
        \sum_{h_1 \geq 1}
        \left(\log h_1 \right)
        \sqrt{x}^{\,2h_1-1}
        =
        (1+x) 
        \sum_{h \geq 1}
        \log h
        \;
        x^h
        = (1+x) \Li_{0,1}(x)
        \, ,
    \end{split}
\end{equation}
where $\Li_{s,r}(x) = \sum_{h\geq 1} h^{-s} \left( \log(h) \right)^r
x^h$ is the generalized polylogarithm function \cite{fill2005SingularityAnalysisHadamard}. 

In both cases, the dominant singularity is at $x(z)=1$, i.e.\ at $z=1/4$.
We have
\begin{equation}
    \begin{split}
x\left( z \right) = 1 - 2\sqrt{1-4z} + 2(1-4z) + \bigO \left(
(1-4z)^{\frac{3}{2}} \right) 
\quad \text{for } z \rightarrow \left( \smfrac{1}{4} \right)^- \, ,
    \end{split}
\end{equation}
and 
\begin{equation}
    \begin{split}
        \Li_{0,1}(x) = \frac{\LOG(x) - \gamma_E}{1-x} + \bigO\left(\LOG(x)\right) 
        \quad \text{for } x \rightarrow 1^-
        \, 
    \end{split}
\end{equation}
where $\LOG(x) = \log\left( (1-x)^{-1} \right)$. Here and in the
following, the rewriting of
$\Li_{\alpha,r}(x)$ (for $-\alpha, r \in \mathbb{N}$) in the form
$P(\LOG(x))/(1-x)^{1-\alpha}$, with $P(y)$ a polynomial of degree $r$,
can be done either by matching the asymptotics of the coefficients in the
two expressions (and appealing to the Transfer Theorem), or by using
the explicit formulas in
\cite[Thm.\ VI.7]{flajolet2009AnalyticCombinatorics}.
In this paper we mostly adopt the first strategy.
Passing to the variable $z$ gives
\begin{equation}
    \begin{split}
        \Li_{0,1}\left( x(z)  \right)  
        = \frac{\LOG(4z) - 2 \gamma_E - 2\log2}{4 \sqrt{1-4z}} +
        \bigO\left( \LOG(4z) \right)  
\quad \text{for } z \rightarrow \left( \smfrac{1}{4}\right)^-
        \, .
    \end{split}
\end{equation}
Thus, the singular expansion of $M^{\rm (T)}_1(z)$ is given by
\begin{equation}\label{eq:M1BZasy}
    \begin{split}
        M_1^{\rm (B)}(z) 
        = \frac{\LOG(4z) - 2 \gamma_E - 2\log2}{4
          (1-4z)^{\frac{3}{2}}} + \bigO\left( \frac{\LOG(4z)}{1-4z}
        \right)  
\quad \text{for } z \rightarrow \left( \smfrac{1}{4}\right)^-
        \,
    \end{split}
\end{equation}
and
\begin{equation}\label{eq:M1EZasy}
    \begin{split}
        M_1^{\rm (E)}(z) 
        = \frac{\LOG(4z) - 2 \gamma_E - 2\log2}{2 \sqrt{1-4z}} +
        \bigO\left( \LOG(4z) \right)  
\quad \text{for } z \rightarrow \left( \smfrac{1}{4}\right)^-
        \, .
    \end{split}
\end{equation}
The behaviour of $T_N M^{\rm (T)}_{N,1}$ for large $N$ can be now
estimated by using the so-called \emph{transfer theorem} (see
\cite{flajolet2009AnalyticCombinatorics}, in particular Chapter VI for
general informations, and the table in Figure VI.5 for the explicit
formulas), that allows to jump back and forth between singular
expansion of generating functions and the asymptotic expansion at
large order of their coeffcients.  In practice, we can expand the
approximate generating functions given in Equations~\eqref{eq:M1BZasy}
and~\eqref{eq:M1EZasy} to get an asymptotic approximation for $T_N
M^{\rm (T)}_{N,1}$.  Recalling that
\begin{align}\label{eq:TNasy}
    B_N &= \frac{4^N}{\sqrt{\pi}N^{\frac{1}{2}}} \left( 1+\bigO(N^{-1}) \right) \ef , &
    C_N &= \frac{4^N}{\sqrt{\pi}N^{\frac{3}{2}}} \left( 1+\bigO(N^{-1}) \right)  \, 
\end{align}
for $N\rightarrow\infty$,
we obtain an asymptotic expansion for the first moment of $S(\vsig)$
(which agrees with what we already found in Equation~\eqref{eq:M1wiener})
\begin{equation}
    \begin{split}
        M^{\rm (B)}_{1,N} &= \frac{1}{2} N \log N - \frac{\gamma_E+2}{2} N + \bigO\left(\sqrt{N} \log(N)\right) \\ 
        M^{\rm (E)}_{1,N} &=  \frac{1}{2} N \log N - \frac{\gamma_E}{2} N + \bigO\left(\sqrt{N} \log(N)\right) \, .
    \end{split}
\end{equation}
Notice that, although we have truncated our perturbative series at the
first significant order, in principle the combinatorial method gives
us access to finite-size corrections at arbitrary finite order. 

\subsubsection{Singularity analysis for $k=2$}
\label{sec:mom2}

For $k=2$, we compute Equation~\eqref{eq:Mkz2} by studying separately terms at different values of $c$.
Let us start from bridges.
For $c=1$, and thus $\nu_1=2$, we have
\begin{equation}
    \begin{split}
        M^{\rm (B)}_2(z) \vert_{c=1} 
                =4 \sqrt{z} B(z)^2 
                \sum_{h_1 \geq 1}
                \frac{\left(\log h_1 \right)^2}{2}
                \sqrt{x}^{\,2h_1-1}
                =
                \frac{2(1+x)}{(1-x)^2} \Li_{0,2}(x)
        \, 
    \end{split}
\end{equation}
while for $c=2$, and thus $\nu_1=\nu_2=1$, we have
\begin{equation}
    \begin{split}
        M^{\rm (B)}_2(z) \vert_{c=2} 
                =4 z B(z)^3
                \sum_{h_1,h_2 \geq 1}
                \log h_1 
                \log h_2
                \sqrt{x}^{\,h_1}
                \left( \sqrt{x}^{\,|h_2-h_1+1|} + \sqrt{x}^{\,h_2+h_1-1} \right) 
                \sqrt{x}^{\,h_2-1}
        \, .
    \end{split}
\end{equation}
The presence of the absolute value $|h_2-h_1+1|$ forces us to consider separately the case $h_1 > h_2$ and $h_1 \leq h_2$.
In the first case we get
\begin{equation}
    \begin{split}
           &
           4 z B(z)^3
           \sum_{h_1 > h_2 \geq 1}
           \left(\log h_1 \log h_2 \right)
           \sqrt{x}^{\,h_1}
           (\sqrt{x}^{\,h_1-h_2-1}+\sqrt{x}^{\,h_1+h_2-1})
           \sqrt{x}^{\,h_2-1}
           \\
           &=
           4 z B(z)^3 
           \bigg(
               \sum_{h_1 \geq 1}
               \left(\log (h_1+1) \log (h_1!) \right)
               x^{h_1}
               +
               \sum_{h_1 > h_2 \geq 1}
               \left(\log h_1 \log h_2 \right)
               x^{h_1+h_2-1}
           \bigg)
           \ef,
    \end{split}
\end{equation}
while in the second case we obtain
\begin{equation}
    \begin{split}
          &
          4 z B(z)^3 
          \sum_{1 \leq h_1 \leq h_2}
          \left(\log h_1 \log h_2 \right)
          \sqrt{x}^{\,h_1}
          (\sqrt{x}^{\,h_2-h_1+1}+\sqrt{x}^{\,h_1+h_2-1})
          \sqrt{x}^{\,h_2-1}
          \\
          &=
          4 z B(z)^3 
          \bigg(
              \sum_{h_2 \geq 1}
              \left(\log h_2 \log h_2! \right)
              x^{h_2}
              +
              \sum_{1 \leq h_1 \leq h_2}
              \left(\log h_1 \log h_2 \right)
              x^{h_1+h_2-1}
          \bigg)
          \ef.
    \end{split}
\end{equation}
The combination of these two terms gives
\begin{equation}
    \begin{split}
        M_2^{\rm (B)}(z)|_{c=2}
           &=
           \frac{4 x (1+x)}{(1-x)^3}
           \bigg(
               \sum_{h \geq 1}
               \left(\log (h^2+h) \log h! \right)
               x^{h}
               +
               \frac{1}{x} (\Li_{0,1} )^2
           \bigg)
           \ef .
    \end{split}
\end{equation}
In the case of excursions, the computations are completely analogous, and give
\begin{equation}
    \begin{split}
        M^{\rm (E)}_2(z) \vert_{c=1} 
        =2 \sqrt{z} B(z)^2 (1-x)^2
                \sum_{h_1 \geq 1}
                \frac{\left(\log h_1 \right)^2}{2}
                \sqrt{x}^{\,2h_1-1}
                =
                (1+x) \Li_{0,2}(x)
        \, 
    \end{split}
\end{equation}
\begin{equation}
    \begin{split}
        M_2^{\rm (E)}(z)|_{c=2}
           &=
           \frac{2x(1+x)}{1-x}
           \bigg(
               \sum_{h \geq 1}
               \left(\log (h^2+h) \log h! \right)
               x^{h}
               - (\Li_{0,1} )^2
           \bigg)
           \ef .
    \end{split}
\end{equation}
In order to compute the singular expansion of $\Li_{0,2}(x)$ and of
$\sum_{h\geq1}( \log(h^2+h) \log(h!) x^h)$, one can again use the
transfer theorem, obtaining
\begin{equation}
    \begin{split}
        \Li_{0,2}(x) = \frac{\LOG(x)^2 - 2\gamma_E \LOG(x) + \gamma_E^2 + \frac{\pi^2}{6}}{1-x} + \bigO\left( \LOG(x)^2 \right) 
         \, 
    \end{split}
\end{equation}
and 
\begin{equation}
    \begin{split}
        &\sum_{h\geq1}( \log(h^2+h) \log(h!) z^h) = \\
        &\quad
        \frac{2 \LOG(x)^2 +2(1-2\gamma_E) \LOG(x) + \frac{\pi^2}{3} +2\gamma_E^2-2\gamma_E-2   }{(1-x)^2}
        + \bigO\left( \frac{\LOG(x)^2}{1-x} \right) 
         \, .
    \end{split}
\end{equation}
We see that, for both Dyck bridges and paths, the $c=1$ term is
subleading with respect to the $c=2$ term, by a factor $1-x$ which,
after use of the Transfer Theorem, implies a factor
$\mathcal{O}(N^{-1})$. It is easy to imagine (and in agreement with
the discussion in Section~\ref{sec:integral}) that this pattern will
hold also for all subsequent moments, that is, the leading term for
the $k$-th moment will be given by the $c=k$ contribution alone, all
other terms altogether giving a correction $\mathcal{O}(N^{-1})$.

After substituting $x(z)$ with its expansion around $z = 1/4$, and
after having performed a series of tedious but trivial computations,
we obtain
\begin{equation}\label{eq:mom2com}
    \begin{split}
        M^{\rm (B)}_{2,N} &= \frac{1}{4} N^2 \left( \log N \right)^2 - \frac{\gamma_E+2}{2} N^2 \log N + \left( \frac{4}{3} + \frac{\gamma_E^2}{4} + \gamma_E - \frac{\pi^2}{72} \right)  N^2 + \bigO\left (N^\frac{3}{2} \left( \log N \right)^2\right) \, ,\\ 
        M^{\rm (E)}_{2,N} &= \frac{1}{4} N^2 \left( \log N \right)^2 - \frac{\gamma_E}{2} N^2 \log N + \left( \frac{\gamma_E^2}{4}+\frac{5 \pi ^2}{24}-2 \right)  N^2 + \bigO\left (N^\frac{3}{2} \left( \log N \right)^2\right) \, .
    \end{split}
\end{equation}
Finally, we recover the moments of $s(\vsig)$
\begin{equation}
    \begin{split}
        \langle (s(\vsig))^2 \rangle_{\rm B} 
        &= \frac{4}{3} + \frac{\gamma_E^2}{4} + \gamma_E - \frac{\pi^2}{72} + \bigO\left( \frac{(\log N)^2}{\sqrt{N}} \right) \, ,\\
        \langle (s(\vsig))^2 \rangle_{\rm E} &=
        \frac{\gamma_E ^2}{4}+\frac{5 \pi ^2}{24} -2
        + \bigO\left( \frac{(\log N)^2}{\sqrt{N}} \right) 
        \, .
    \end{split}
\end{equation}
Details for these computations can be found in Appendix~\ref{app:mom2}.

\section{Numerical results}
\label{sec:numerics}

To check our analytical predictions, we performed an exact sampling of
our configurations, and collected a statistics on the resulting
entropies, from which 
we estimated the distribution of the rescaled entropy
\begin{equation}
    \begin{split}
        s(\vsig) = \frac{1}{N}\left( S(\vsig) - \frac{1}{2}\log N \right)  \, .
    \end{split}
\end{equation}
For both bridges and excursions, and
for values of $N=10^3,10^4,10^5,10^6$, we sampled uniformly $10^5$
random paths.

Figure~\ref{fig:numerics} summarises our results.  We clearly see that
as $N$ grows larger and larger, the prediction for the first two
moments of $s$ matches better and better the empirical value in both
ensembles.  The distribution of $s$ is clearly non-Gaussian in the
case of Dyck bridges.  For Dyck excursion, a quick Kolmogorov test
rules out the Gaussian hypothesis (in particular, more easily, the
4-th centered moment is only $\sim 2.7$ times the 2-nd centered moment
squared, instead of a factor $3$ required for gaussianity). However,
at the present statistical precision we cannot rule out the hypothesis
that the centered distribution is symmetric, i.e.\ that all the
centered odd moments vanish, although we have no theoretical argument
for conjecturing this fact, neither from the probabilistic approach
of Section \ref{sec:integral}, nor from the combinatorial approach of 
Section \ref{sec:combinatorial}. It would be interesting to understand
the reasons of this unexpected numerical finding.

The code and the raw data used to produce Figure~\ref{fig:numerics}
are available at
\url{https://github.com/vittorioerba/EntropyMatching}.

\begin{figure}
    \centering
    \begin{tikzpicture}
        \begin{scope}
            \node at (3.5,-3.5) {\includegraphics[height=3cm]{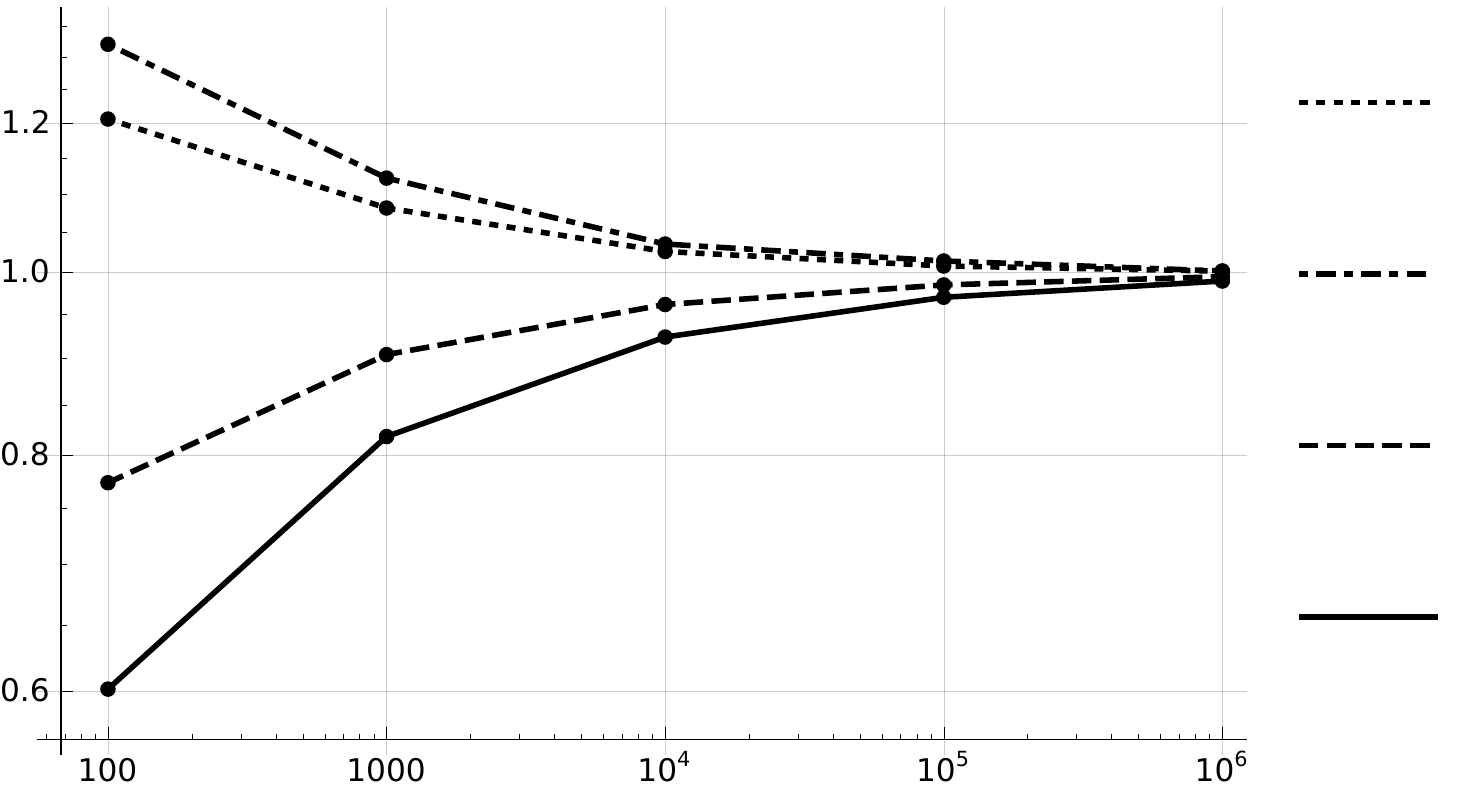}};
            \node at (0,0) {\includegraphics[height=3cm]{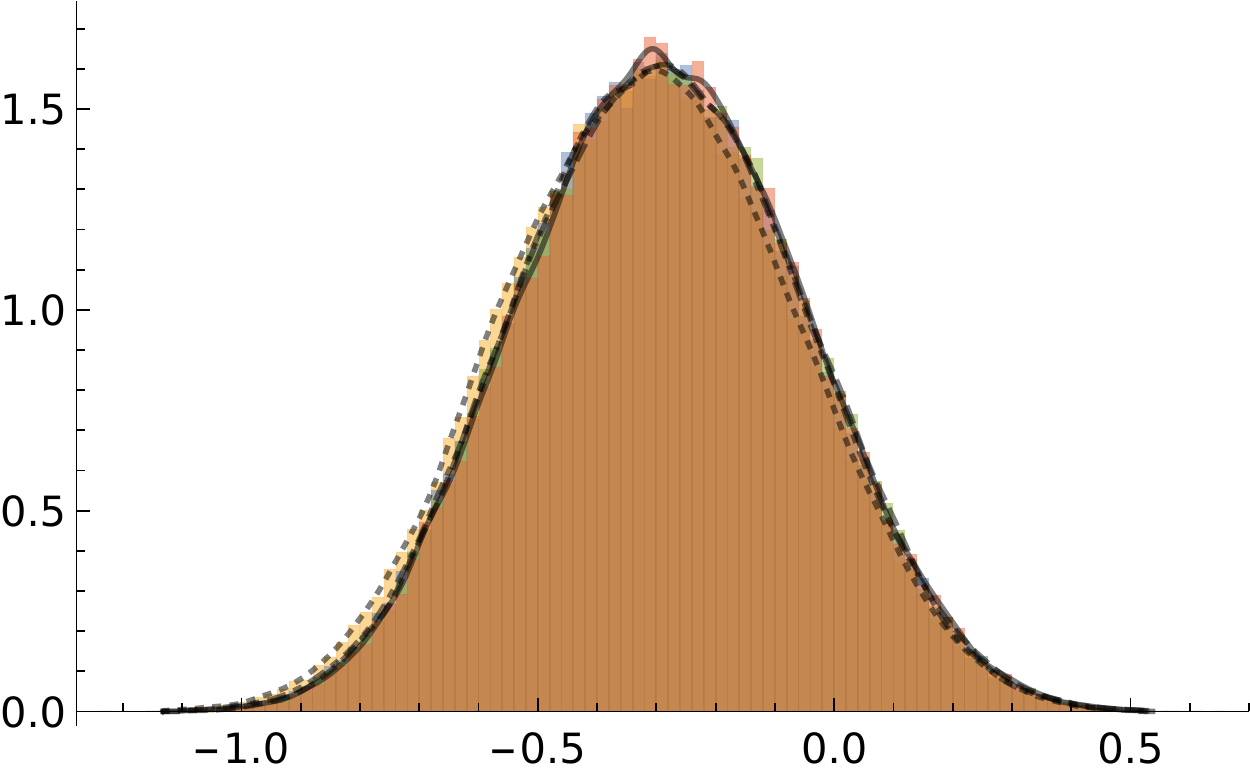}};
            \node at (7,0) {\includegraphics[height=3cm]{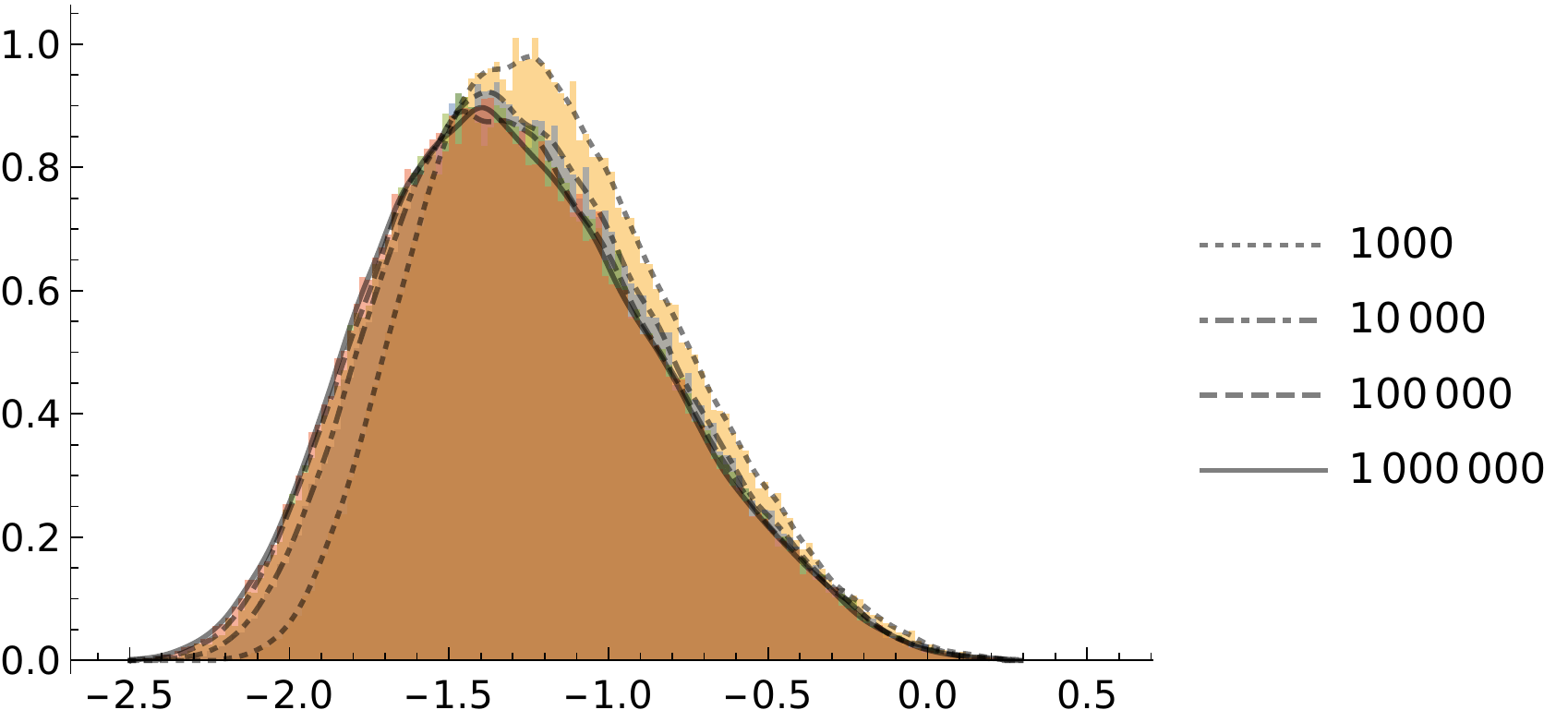}};
            \node at (0,2) {Dyck excursions};
            \node at (6,2) {Dyck bridges};
            \node at (9.2,1) {\footnotesize Size $N$};
            \node at (6,-5) {\footnotesize Size $N$};
            \node[rotate=90, text width=2.5cm, align=center] at (-0.25,-3.4) {\footnotesize Ratio between empirical and predicted value};
            \node at (6.75,-2.3) {\footnotesize $M^{\rm (E)}_{1,N}$};
            \node at (6.75,-3) {\footnotesize $M^{\rm (E)}_{2,N}$};
            \node at (6.75,-3.6) {\footnotesize $M^{\rm (B)}_{1,N}$};
            \node at (6.75,-4.3) {\footnotesize $M^{\rm (B)}_{2,N}$};
        \end{scope}
    \end{tikzpicture}
    \caption{\footnotesize 
        \textbf{Top:} distribution of $s$ for the Dyck excursions (left) and Dyck bridges (right) ensembles at various sizes $N$. The shaded fillings are histograms, while the black profiles are kernel density estimates.
        \textbf{Bottom:} ratio between empirical and predicted values of the first two moments as a function of the size $N$, for both ensembles.
    }
    \label{fig:numerics}
\end{figure}

\section{Acknowledgements}

A. Sportiello is partially supported by the Agence Nationale de la
Recherche, Grant Numbers ANR-18-CE40-0033 (ANR DIMERS) and
ANR-15-CE40-0014 (ANR MetACOnc).

\appendix

\section{The first two moments of the rescaled entropy in the integral representation}
\label{app:momint}

In this Appendix we provide the computation for the first two moments of the rescaled entropy in the bridges ensemble, using the integral representation.
The case of excursion can be treated analogously. 

\subsection{Useful formulas}

We start by providing some useful identities.

We start giving an explicit representation for the non-integer Gaussian moments:
\begin{equation}
  \int_{- \infty}^{\infty} d y \, y^{2 k} e^{- \frac{y^2}{\lambda}} = \Gamma
  \left( k + \frac{1}{2} \right) \lambda^{k + \frac{1}{2}} . \label{Gaussian}
\end{equation}

In the following we shall also use the duplication formula for the Gamma function
\begin{equation}
    \begin{split}
   \Gamma (s) \Gamma \left( s + \frac{1}{2} \right) = 2^{1 - 2 s} \sqrt{\pi}
   \Gamma (2 s)       \, , 
    \end{split}
\end{equation}
and the expansion for the hyperbolic cosine
\begin{equation}
    \begin{split}
   \cosh (2 z) = \sum_{s \geqslant 0} \frac{(2 z)^{2 s}}{(2 s) !} = \sqrt{\pi}
   \sum_{s \geqslant 0} \frac{1}{\Gamma \left( s + \frac{1}{2} \right)} 
   \frac{z^{2 s}}{s!}      \, .
    \end{split}
\end{equation}

Finally, we recall the definition of the hypergeometric function
\begin{equation}
    \begin{split}
   {}_2 F_1 (a, b ; c ; z) = \frac{\Gamma (c)}{\Gamma (a) \Gamma (b) } 
   \sum_{s \geqslant 0} \frac{\Gamma (s + a) \Gamma (s + b)}{\Gamma (s + c) } 
   \frac{z^s}{s!}       \, ,
    \end{split}
\end{equation}
and the Euler identity
\begin{equation}
    \begin{split}
   {}_2 F_1 (a, b ; c ; z) = (1 - z)^{c - b - a}  {}_2 F_1 (c - a, c - b ; c ;
   z)       \, 
    \end{split}
\end{equation}
which implies that
\begin{equation}
    \begin{split}
  \sum_{s \geqslant 0} \frac{\Gamma (s + a) \Gamma (s + b)}{\Gamma (s + c) } 
  \frac{z^s}{s!} 
  & =  (1 - z)^{c - b - a}  \frac{\Gamma (a) \Gamma
  (b)}{\Gamma (c - a) \Gamma (c - b)}  \sum_{s \geqslant 0} \frac{\Gamma (s + c - a) \Gamma (s + c -
  b)}{\Gamma (s + c) }  \frac{z^s}{s!} \label{ide} 
         \, .
    \end{split}
\end{equation}

\subsection{First moment}
\label{app:mom1integral}

We wish to compute
\begin{equation}
    \begin{split}
   \langle s [\vsig] \rangle_{\rm B}  = \int_0^1 d t \int_{-
   \infty^{}}^{\infty} d x \frac{\log x^2}{2}  \frac{e^{- \frac{x^2}{4 t (1 -
   t) }}}{\sqrt{4 \pi t_{} (1 - t_{})}} = \frac{1}{2} \int_{0}^1 dt I_1(t)       \, .
    \end{split}
\end{equation}
First of all, we substitute $\log x^2 \to x^{2k} $. We will later take the derivative in $k$ and evaluate our expressions for $k=0$ to obtain back the logarithmic contribution.

Thus, we start from
\begin{equation}
    \begin{split}
   I_1 (t,k) 
   = \int_{- \infty^{}}^{\infty} d x x^{2 k_{}} \frac{e^{-
   \frac{x^2}{4 t (1 - t) }}}{\sqrt{4 \pi t_{} (1 - t_{})}} 
   = \frac{\lambda^{k
   + \frac{1}{2}} \Gamma \left( k + \frac{1}{2} \right)}{\sqrt{\pi \lambda}}
   = \frac{\lambda^{k} \Gamma \left( k + \frac{1}{2} \right)}{\sqrt{\pi}}
   \, 
    \end{split}
\end{equation}
where $\lambda = 4 t (1 - t)$
so that
\begin{equation}
    \begin{split}
   I_1 (t,k) = \frac{[4 t_{} (1 - t)^{}]^k}{\sqrt{\pi}} \Gamma \left( k +
   \frac{1}{2} \right)       \, 
    \end{split}
\end{equation}
Then, 
\begin{equation}
    \begin{split}
        I_1 (t) = \partial_k I_1 (t,k) \vert_{k=0}   
            = 
   \left[ \log [4 t_{} (1 - t)^{}] + \psi_0 \left( \frac{1}{2} \right) \right]
        \, 
    \end{split}
\end{equation}
where $\psi_0 (z) = \frac{d}{d z} \log \Gamma (z) $ is the digamma function,
and
\begin{equation}
    \begin{split}
\psi_0 \left( \frac{1}{2} \right) = - \gamma_E - \log 4 \, .
    \end{split}
\end{equation}

Finally, 
\begin{equation}
    \begin{split}
   \langle s [\vsig] \rangle_{\rm B}  = \frac{1}{2} \int_0^1 d t \{ \log
   [t_{} (1 - t)^{}] - \gamma_E \} = - \frac{2 + \gamma_E}{2}        \, .
    \end{split}
\end{equation}

\subsection{Second moment}
\label{app:mom2integral}

We wish to compute
\begin{equation}\label{eq:mom2int1}
    \begin{split}
        \left \langle (s[\vsig])^2 \right\rangle_{\rm B}  
        &= 2! \int_{\Delta_2 }  dt_1 dt_2 \frac{1}{4 \pi \sqrt{ t_1  (t_2 - t_1 )  (1-t_2 )}} \\
        &\qquad\times \int_{-\infty}^{\infty} dx dy \frac{\log x^2 }{2} \frac{\log y^2 }{2} \exp \left[ -\frac{x^2}{4t_1 } - \frac{(x-y)^2}{4(t_2 -t_1 )}  - \frac{x^2}{4(1-t_2 )} \right] 
        \\
        &= \frac{1}{2} \int_{\Delta_2 }  dt_1 dt_2  \, I_2 (t_1 ,t_2 )
        \,.
    \end{split}
\end{equation}
Again, we substitute $\log x^2 \to x^{2k_1}$ and $\log y^2 \to y^{2k_2}$, and we will recover the correct logarithmic factors by taking derivatives in $k_1$ and $k_2$ later.   
We start by dealing with the contact term:
\begin{equation}
    \begin{split}
        \exp \left[ -\frac{(x-y)^2}{4 (t_2 -t_1 )} \right]
        &=
        \exp \left[ -\frac{x^2 + y^2}{4 (t_2 -t_1 )} \right] \cosh \left( \frac{x y}{2 (t_2 - t_1 )} \right)
        \\
        &=
        \exp \left[ -\frac{x^2 + y^2}{4 (t_2 -t_1 )} \right] \sum_{s \geq 0} \frac{\sqrt{\pi}}{\Gamma\left(s+\frac{1}{2}\right) s!} \left( \frac{xy}{4(t_2 - t_1)} \right)^{2s} 
        \, 
    \end{split}
\end{equation}
where the hyperbolic sine was discarded due to parity in $x$ and $y$.
With this substitution, the integrals in $x$ and $y$ are decoupled, and can be evaluated as
\begin{equation}
    \begin{split}
        I_2 &(t_1 ,t_2 ,k_1 ,k_2 )  = \\
        &= 
        \frac{1}{4 \pi \sqrt{t_1 (t_2 -t_1 )(1-t_2 )}}
        \int_{-\infty}^{\infty} dx dy \,  x^{2k_1} y^{2k_2} \exp \left[ -\frac{x^2}{4t_1 } - \frac{(x-y)^2}{4(t_2 -t_1 )}  - \frac{x^2}{4(1-t_2 )} \right] \\
        &= \frac{2}{\pi \sqrt{\Delta_1 \Delta_2 \Delta_3 }}
        \sum_{s \geq 0} \frac{\sqrt{\pi}}{\Gamma\left(s+\frac{1}{2}\right) s! \Delta_2 ^{2s} }  
            \int_{-\infty}^{\infty} dx x^{2(s+k_1 ) } \exp\left[ - \frac{x^2}{\lambda_1 }\right]
            \int_{-\infty}^{\infty} dy y^{2(s+k_2 )} \exp\left[ - \frac{y^2}{\lambda_2 }\right] 
            \\
        &= \frac{2 \lambda_1 ^{k_1 +\frac{1}{2} } \lambda_2 ^{k_2 +\frac{1}{2} }}{\sqrt{\pi \Delta_1 \Delta_2 \Delta_3 }}
    \sum_{s \geq 0} \frac{ \Gamma\left( k_1 +s+\frac{1}{2} \right) \Gamma\left(k_2 + s+ \frac{1}{2}\right)}{\Gamma\left(s+\frac{1}{2}\right) s!} \left( \frac{ \lambda_1 \lambda_2 }{\Delta_2 ^2}\right)^s
        \, 
    \end{split}
\end{equation}
where 
\begin{align}
    \lambda_1 &= \frac{4 t_1 (t_2 - t_1 ) }{t_2 } \, , & \lambda_2 &= \frac{4 (t_2 -t_1 )(1 - t_2 )}{1-t_1 } \, , &  \Delta_i &= 4(t_{i}  -t_{i-1} ) \, ,
\end{align}
(we recall that $t_0 =0$ and $t_3 = 1$ by convention).
We also notice that the following identity holds
\begin{equation}
    \begin{split}
        \frac{2 \sqrt{\lambda_1 \lambda_2}}{\sqrt{\Delta_1 \Delta_2 \Delta_3}} \left(1- \frac{\lambda_1 \lambda_2}{\Delta_2 ^2}\right)^{-\frac{1}{2}} = 1
         \, 
    \end{split}
\end{equation}
as it can be explicitly verified by substituting the definitions.

By using the previous identity and Equation~\eqref{ide}, we can rewrite $I_2 (t_1 ,t_2 ,k_1 ,k_2 )$  as
\begin{equation}
    \begin{split}
        I_2 &(t_1 ,t_2 ,k_1 ,k_2 ) =
        \\
            &= \frac{\lambda_1 ^{k_1 } \lambda_2 ^{k_2 }}{ \sqrt{\pi}}
        \left(1-\frac{\lambda_1 \lambda_2 }{\Delta_2 ^2}\right)^{-k_1 -k_2 }
        \frac{\Gamma\left( k_1 +\frac{1}{2} \right) \Gamma\left( k_2 +\frac{1}{2}\right)}{\Gamma\left(-k_1\right) \Gamma\left(-k_2 \right)}
    \sum_{s \geq 0} \frac{ \Gamma\left( s - k_1 \right) \Gamma\left(s- k_2 \right)}{\Gamma\left(s+\frac{1}{2}\right) s!} \left( \frac{ \lambda_1 \lambda_2 }{\Delta_2 ^2}\right)^s
    \\
            &= k_1 k_2 \left[ \sqrt{\pi} \sum_{s\geq 1} \frac{\Gamma\left(s\right) \Gamma\left(s\right)}{\Gamma\left(s+\frac{1}{2}\right) s!} \left(\frac{\lambda_1 \lambda_2 }{\Delta_2 ^2}\right)^s + \bigO(k_1 ) \bigO(k_ 2)  \right] 
            \\
            &\quad
            +
\frac{\lambda_1 ^{k_1 } \lambda_2 ^{k_2 }}{\pi}
        \left(1-\frac{\lambda_1 \lambda_2 }{\Delta_2 ^2}\right)^{-k_1 -k_2 }
        \Gamma\left( k_1 +\frac{1}{2} \right) \Gamma\left( k_2 +\frac{1}{2}\right)
    \, ,
    \end{split}
\end{equation}
where in the last line we used the fact that $\Gamma(-k) = k^{-1} + \dots$ as $k$ goes to zero to highlight the linear dependence in $k_1$ and $k_2$ in the first term. 
Notice that
\begin{equation}
    \begin{split}
         \sum_{s\geq 1} \frac{\Gamma\left(s\right) \Gamma\left(s\right)}{\Gamma\left(s+\frac{1}{2}\right) s!} \left(\frac{\lambda_1 \lambda_2 }{\Delta_2 ^2}\right)^s  
         = \frac{2}{\sqrt{\pi}} \arcsin^2 \left(\sqrt{z}\right)
 \,.
    \end{split}
\end{equation}

We can now take the derivative with respect to $k_1$ and $k_2$ and evaluate the expression in $k_1 =k_2 =0$ to obtain $I_2 (t_1 ,t_2 )$
\begin{equation}
    \begin{split}
        I_2 (t_1 ,t_2 )
        &= \partial ^2 _{k_1 ,k_2 } I_2 (t_1 ,t_2 ,k_1 ,k_2 ) \vert_{k_1 =k_2 = 0 }  
        \\
        &=  2 \arcsin^2 \left(\sqrt{\frac{\lambda_1 \lambda_2 }{\Delta_2 ^2}}\right)
        \\
        &\quad
        + 
        \left[ \psi_0 \left(\frac{1}{2}\right) + \log \lambda_1 - \log\left(1 - \frac{\lambda_1 \lambda_2 }{\Delta_2 ^2}\right) \right]
        \left[ \psi_0 \left(\frac{1}{2}\right) + \log \lambda_2 - \log\left(1 - \frac{\lambda_1 \lambda_2 }{\Delta_2 ^2}\right) \right]
        \\
        &= 2 \arcsin^2 \left(\sqrt{\frac{ t_1 (1-t_2 )}{(1-t_1 )t_2 }}\right)
        \\
        &\quad
        +  
        \left[ \psi_0 \left(\frac{1}{2}\right) + \log \left(4 t_1 (1-t_1 )\right) \right]
        \left[ \psi_0 \left(\frac{1}{2}\right) + \log \left(  4 t_2 (1-t_2 )\right) \right]
        \, .
    \end{split}
\end{equation}

The second line is easily treated by noting that it is symmetric under $t_1 \to 1- t_1$, giving
\begin{equation}
    \begin{split}
        &
        \frac{1}{2} \int_0 ^1 dt_1 \int_{t_1} ^1 dt_2   
        \left[ \psi_0 \left(\frac{1}{2}\right) + \log \left(4 t_1 (1-t_1 )\right) \right]
        \left[ \psi_0 \left(\frac{1}{2}\right) + \log \left(  4 t_2 (1-t_2 )\right) \right]
        \\
        &\quad
        =
        \frac{1}{4} \int_0 ^1 dt_1 \int_0 ^1 dt_2   
        \left[ \psi_0 \left(\frac{1}{2}\right) + \log \left(4 t_1 (1-t_1 )\right) \right]
        \left[ \psi_0 \left(\frac{1}{2}\right) + \log \left(  4 t_2 (1-t_2 )\right) \right]
        \\
        &\quad
        =\left[ \frac{1}{2}\int_0 ^1 dt  
        \left( \log \left(t (1-t )\right) - \gamma_E \right)
        \right]^2
        \\
        &\quad
        = \left( - \frac{\gamma_E +2}{2} \right)^2
        = \left( \left\langle s[\vsig] \right\rangle_{\rm B}  \right)^2
        \, .
    \end{split}
\end{equation}

Thus, the variance of $s[\vsig]$ is given by 
\begin{equation}
    \begin{split}
        \left\langle \left( s[\vsig] \right)^2 \right\rangle_{\rm B} -
        \left( \left\langle s[\vsig] \right\rangle_{\rm B}  \right)^2
        &=
        \frac{1}{2} \int_0 ^1 dt_1 \int_{t_1}^1 dt_2 
        \, 2 \arcsin^2 \left( \sqrt{ \frac{t_1 (1-t_2 )}{(1-t_1 ) t_2}} \right) 
        \\
        &= \int_0 ^1 dt \int_0 ^1 dz  \,
        \frac{t(1-t)}{(t+z-tz)^2}
        \arcsin^2 \left( \sqrt{z} \right) 
        \\
        &= \int_0 ^1 dz \left[  - \frac{2(1-z) + (1+z) \log z}{(1-z)^3} \right]
        \arcsin^2 \left( \sqrt{z} \right) 
        \\
        &= \frac{1}{3} - \frac{\pi^2}{72} \, .
        \, 
    \end{split}
\end{equation}

\section{Computations needed in Section~\ref{sec:mom2}}\label{app:mom2}

In this Appendix, we present in greater detail the computations sketched in Section~\ref{sec:mom2}.

First of all, let us state the transfer theorem of singularity analysis in a slightly unprecise, but operatively correct form. See \cite[Chapter VI]{flajolet2009AnalyticCombinatorics} for the details.
\begin{theorem}
    Let $f(z)$, $g(z)$ and $h(z)$ be generating function with unit
    radius of convergence, and let $f_n, g_n$ and $h_n$ be their
    coefficients. Then
    \begin{equation}
        \begin{split}
            f(z) = g(z) + \bigO(h(z)) \qquad z \rightarrow 1^-
        \end{split}
    \end{equation}
    if and only if
    \begin{equation}
        \begin{split}
            f_n \sim g_n + \bigO(h_n) \qquad n \rightarrow \infty \, .
        \end{split}
    \end{equation}
\end{theorem}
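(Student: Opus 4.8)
The plan is to prove the equivalence by reducing it to two classical ingredients: (i) the exact coefficient asymptotics of the \emph{standard scale} of functions $(1-z)^{-\alpha}\bigl(\log\tfrac{1}{1-z}\bigr)^{k}$, and (ii) a one-sided \emph{transfer} for the error term, namely that $f(z)=\bigO(h(z))$ near $z=1$ forces $f_n=\bigO(h_n)$ for the coefficients. Since in all our applications $g(z)$ is a finite linear combination of standard-scale functions (this is precisely the form in which the singular expansions of $M^{\rm (T)}_k(z)$ were written), and the remainder $f(z)-g(z)$ is dominated by $h(z)$ (itself of standard-scale type, $h(z)\asymp(1-z)^{-a}(\log\tfrac{1}{1-z})^{b}$), the full statement follows by linearity once (i) and (ii) are in place; the converse direction then comes for free since the coefficient asymptotics of distinct standard functions are linearly independent.

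For (i), I would start from Cauchy's coefficient formula $[z^{n}]\phi(z)=\frac{1}{2\pi i}\oint \phi(z)\,z^{-n-1}\dx{z}$ and deform the contour to a \emph{Hankel contour} that approaches the singularity $z=1$ at distance of order $1/n$ while keeping the rest of the path on a circle $|z|=1+\eta$ of fixed small $\eta>0$. On the circular portion the integrand is $\bigO\bigl((1+\eta)^{-n}\bigr)$, hence exponentially negligible; on the Hankel portion the substitution $z=1+t/n$ turns the integral into a Hankel representation of $1/\Gamma(\alpha)$, yielding $[z^{n}](1-z)^{-\alpha}\sim n^{\alpha-1}/\Gamma(\alpha)$, and differentiating this identity in $\alpha$ produces the refinements carrying powers of $\log n$. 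This reproduces exactly the dictionary between $\Li_{s,r}$-type singularities and $N^{\text{power}}(\log N)^{\text{power}}$ coefficients used in Section~\ref{sec:mom2}.

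For (ii), the crucial point is that ``$f(z)=\bigO(h(z))$ as $z\to1$'' has to be read \emph{uniformly on a $\Delta$-domain} $\Delta=\{\,|z|<1+\eta,\ |\arg(z-1)|>\varphi\,\}$ for some $0<\varphi<\pi/2$, into which $f$ continues analytically; this is the ``slightly unprecise'' clause in the statement. Granting it, one runs the Cauchy integral along the boundary of $\Delta$, rescaled by $1/n$ near $z=1$, splits it into the near-singularity arc, the two rectilinear segments and the outer arc, and bounds each piece: the outer arc is exponentially small, while on the remaining parts $|f(z)|\le K|1-z|^{-a}\bigl(\log\tfrac{1}{|1-z|}\bigr)^{b}$ together with the elementary convergence of $\int_{0}^{\infty}e^{-t}t^{a-1}(\log t)^{b}\dx{t}$ gives a total contribution of order $n^{a-1}(\log n)^{b}=\bigO(h_n)$. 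The reverse implication (from coefficient asymptotics to the singular expansion) is needed only sparingly and follows along the same lines, or can be quoted verbatim from \cite[Ch.~VI]{flajolet2009AnalyticCombinatorics}.

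The main obstacle, and the reason the statement is labelled ``unprecise'', is exactly the analytic-continuation hypothesis hidden in step (ii): without $\Delta$-analyticity the equivalence is genuinely \emph{false} (lacunary series provide immediate counterexamples), so a fully rigorous treatment would either have to build that hypothesis into the statement or verify, case by case, that every generating function $M^{\rm (T)}_k(z)$ occurring in the paper is $\Delta$-analytic. The latter is in fact immediate here, since these functions are built from $\sqrt{1-4z}$ and from polylogarithms $\Li_{s,r}$, all of which continue analytically to such a domain; I would therefore record this as a short remark rather than expand the pathological side. The genuine analytic content is entirely concentrated in the Hankel-contour estimate of step (i) and the four-arc splitting of step (ii), both routine once the contour of integration is chosen correctly.
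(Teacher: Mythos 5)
Your proposal cannot be measured against a proof in the paper, because the paper gives none: the statement is explicitly introduced as ``the transfer theorem of singularity analysis in a slightly unprecise, but operatively correct form,'' with all details deferred to Flajolet--Sedgewick, Chapter VI. What you have written is a faithful reconstruction of the standard argument from that reference, and it is sound: the Hankel-contour computation of $[z^n](1-z)^{-\alpha}(\log\frac{1}{1-z})^k$ for the basic scale, the four-arc splitting of the Cauchy integral over the boundary of a $\Delta$-domain for the $\bigO$-transfer, and --- most importantly --- the observation that the literal ``iff'' as stated is false without the hidden hypothesis of analytic continuation to a $\Delta$-domain (lacunary series being the standard counterexample). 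That last point is precisely the content of the authors' ``slightly unprecise'' disclaimer, and your remark that $\Delta$-analyticity holds for every generating function actually occurring in the paper (algebraic functions of $\sqrt{1-4z}$ and generalized polylogarithms $\Li_{s,r}$, which continue to a slit plane) is what makes the appendix's use of the theorem legitimate. One small correction: the reverse implication (from $f_n = g_n + \bigO(h_n)$ to $f(z)=g(z)+\bigO(h(z))$ as $z\to 1^-$) does not need to run ``along the same lines'' as the contour estimate; since $h_n\ge 0$ for the standard scale, it is the elementary Abelian direction, obtained by summing $|f_n-g_n|\,|z|^n \le C\sum_n h_n |z|^n$ term by term, and requires no analyticity beyond the disc. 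With that adjustment your sketch is exactly the proof the cited reference supplies.
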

\noindent
In practice, one builds a \emph{standard scale} of functions whose
coefficient expansion is known, expands a generic generating function
over the standard scale (assuming that the scale is rich enough to
admit the generating function of interest in its linear span), and uses the transfer theorem to guarantee
that an asymptotic equivalence at the level of the generating functions implies (and is
implied) by an asymptotic equivalence of their coefficients.  A 
standard scale adopted already in 
\cite[Section VI.8]{flajolet2009AnalyticCombinatorics}, and
which is rich enough for our purposes,
is given by functions of the form
\begin{equation}
    \begin{split}
        \left( \frac{1}{1-x} \right)^\alpha \LOG(x)^\beta  \, 
    \end{split}
\end{equation}
where $\LOG(x) = \log\left( (1-x)^{-1} \right)$.

\subsection{Singular behaviour of $\Li_{0,2}(x)$}

The coefficient of $\Li_{0,2}(x)$ equals $\log^2(h)$.
It is easy to see, using 
\cite[Figure VI.5]{flajolet2009AnalyticCombinatorics}, that the
singular expansion of $\Li_{0,2}(x)$ onto the standard scale must be
made by the terms $(1-x)^{-1} L(x)^2, (1-x)^{-1} L(x), (1-x)^{-1}$ and
higher-order terms.  The coefficient of the expansion can be retrieved
by basic linear algebra:
\begin{itemize}
    \item the coefficient of $(1-x)^{-1} L(x)^2$ must be 1 to reconstruct the $\log(h)^2$ term;
    \item the coefficient of $(1-x)^{-1} L(x)$ must be $-2 \gamma_E$ to cancel the $\log(h)$ term introduced by the first standard scale function;
    \item the coefficient of $(1-x)^{-1}$ must be 
$\zeta(2) + \gamma_E^2 = \pi^2/6 + \gamma_E^2$ 
to cancel the constant terms introduced in the expansion by the previous standard scale functions.
\end{itemize}
The error term can be obtained by observing that all expansions used above are valid up to order $\bigO\left( h^{-1} \log h \right)$.
Thus
\begin{equation}
    \begin{split}
        \Li_{0,2}(x) = \frac{\LOG(x)^2 - 2\gamma_E \LOG(x) + \gamma_E^2 + \frac{\pi^2}{6}}{1-x} + \bigO\left( \LOG(x)^2 \right) \, .
    \end{split}
\end{equation}

\subsection{Singular behaviour of $\sum_{h \geq 1} \left(\log (h^2+h) \log h! \right) x^{h}$}
The procedure is analogous to the computation for the $\Li_{0,2}(x)$. We just need to use Stirling's approximation for $\log h!$ to obtain the expansion of the coefficients in $h$.
We have, for the coefficient of order $k$ of this function,
\begin{equation}
    \begin{split}
& \left(\log (h^2+h) \log h! \right) \\
               &\quad= 
               2 h \log ^2(h)-2 h \log (h)+\log ^2(h)+(1+\log (2)+\log (\pi )) \log (h)-1+
\bigO\left(\frac{\log (h)}{h}\right)
        \, 
    \end{split}
\end{equation}
so that the singular expansion must be made by the terms $(1-x)^{-2} L(x)^2, (1-x)^{-2} L(x), (1-x)^{-2}$, and higher-order ones.
The coefficients can be found using the same strategy adopted for $\Li_{0,2}(x)$, obtaining
\begin{equation}
    \begin{split}
               &\sum_{h \geq 1}
               \left(\log (h^2+h) \log h! \right)
               x^{h} = \\
               &\quad
        \frac{
            2 L(x)^2 + 2(1-2\gamma_E) L(x) +2 \gamma ^2+\frac{\pi ^2}{3} -2-2 \gamma_E
        }{(1-x)^2} + \bigO\left( \frac{L(x)^2}{1-x} \right) \, .
    \end{split}
\end{equation}

\subsection{The change of variable $x(z)$}

First of all, we rewrite the change of variable $x(z) = C(z) -1$ in terms of the singular variables $X = (1-x)^{-1}$ and $Z = (1-4z)^{-1}$.
We have that
\begin{equation}
    \begin{split}
        X(Z) = \frac{1}{2} \left( \sqrt{Z} + 1 \right)  \, 
    \end{split}
\end{equation}
so that
\begin{equation}
    \begin{split}
        (1-x)^{-\alpha} &= X^\alpha = \left( \frac{\sqrt{Z}}{2}
      \right)^\alpha \left( 1 + \frac{\alpha}{\sqrt{Z}} +
      \bigO(Z^{-1})  \right) \, ,  \\
\LOG(x)^k &= \log^k(X)
=
\left(\frac{1}{2} \log Z - \log 2 + \bigO(1/\sqrt{Z})
\right)^k
=
\left(\frac{1}{2} \log Z - \log 2
\right)^k
 + \bigO \left(
\frac{\log^{k-1}(Z)}{\sqrt{Z}} \right)
 \, .
    \end{split}
\end{equation}
These relations are enough to convert singular expansions in $X$ into
singular expansions in $Z$ at the leading algebraic order. The
formulas of this subsection are useful, in principle, also at higher
moments.

\subsection{Explicit expressions for $M^{\rm (T)}_2(z)$}

We report here the explicit expression of $M^{\rm (T)}_2(z)$ both as a
function of $x$ and as a function of $z$:
\begin{equation}\label{eq:ap2}
    \begin{split}
        M^{\rm (E)}_2(z(x)) &= \frac{4 L(x)^2 + 8(1-\gamma_E) L(x) -8-8 \gamma_E+4 \gamma_E^2+\frac{4 \pi ^2}{3} }{(1-x)^3}  + \bigO\left( \frac{L(x)^2}{(1-x)^2} \right) \\
        M^{\rm (E)}_2(z) &=
    \left( \frac{1}{8} L(4z)^2 -\frac{1 - \gamma_E - \log (2)}{2} L(4z)\right)\frac{1}{(1-4z)^\frac{3}{2}} \\
                         &\quad+\frac{\pi ^2-6+3 \gamma_E (\gamma_E-2+\log (4))+(\log (2)-2) \log (8)}{6 (1-4z)^\frac{3}{2}}
                         + \bigO\left( \frac{L(4z)^2}{1-4z} \right) \\
        M^{\rm (B)}_2(z(x)) &= \frac{24 L(x)^2 + 16(1-3\gamma_E) L(x)-16-16 \gamma_E+24 \gamma_E^2+\frac{8 \pi ^2}{3}}{(1-x)^5}  + \bigO\left( \frac{L(x)^2}{(1-x)^4} \right) \\
        M^{\rm (B)}_2(z) &=
    \left( \frac{3}{16} L(4z)^2 -\frac{3 \gamma_E+1- 3 \log (2)}{4} L(4z)\right)\frac{1}{(1-4z)^\frac{5}{2}} \\
                         &\quad+\frac{9 \gamma_E^2+\pi ^2-6+6 \gamma_E (\log (8)-1)+(\log (8)-2) \log (8)}{12 (1-4z)^\frac{5}{2}}
                         + \bigO\left( \frac{L(4z)^2}{(1-4z)^2} \right) 
        \, .
    \end{split}
\end{equation}
These expressions can be easily recalculated also with smaller error
terms, repeating the procedure of the previous paragraphs with more
terms in the Taylor expansions, and provide an evaluation of the
variance of our quantities of interest, at the desired order in $N$.

\end{document}